\renewcommand\eqref[1]{(\ref{#1})} 
\numberwithin{equation}{section}
\theoremstyle{plain}
\newtheorem{thm}{Theorem}[section]
\newtheorem{prop}[thm]{Proposition}
\newtheorem{cor}[thm]{Corollary}
\newtheorem{lem}[thm]{Lemma}
\theoremstyle{definition}
\newtheorem{rem}[thm]{Remark}
\newcommand{\G}{\mathbb G}
\newcommand{\X}{\mathbb X}
\def\e[#1]{{\textrm{e}}^{#1}}
\def\G{{\mathbb G}}
\begin{document}

   \title[Hardy-Sobolev-Rellich, HLS and CKN inequalities on Lie groups]
 {Hardy-Sobolev-Rellich, Hardy-Littlewood-Sobolev and Caffarelli-Kohn-Nirenberg inequalities on general Lie groups}

\author[M. Ruzhansky]{Michael Ruzhansky}
\address{
  Michael Ruzhansky:
 \endgraf
    Department of Mathematics: Analysis, Logic and Discrete Mathematics
  \endgraf
    Ghent University, Belgium
   \endgraf
  and
  \endgraf
  School of Mathematical Sciences
    \endgraf
    Queen Mary University of London, United Kingdom
   \endgraf
  {\it E-mail address} {\rm
michael.ruzhansky@ugent.be}
  }

\author[N. Yessirkegenov]{Nurgissa Yessirkegenov}
\address{
  Nurgissa Yessirkegenov:
  \endgraf
  SDU University, Kaskelen, Kazakhstan
  \endgraf
  and 
  \endgraf
  Institute of Mathematics and Mathematical Modeling, Kazakhstan
    \endgraf
  {\it E-mail address} {\rm nurgissa.yessirkegenov@gmail.com}
  }
 \dedicatory{Dedicated to Luigi Rodino on the occasion of his 75$^{th}$ birthday}

\thanks{This research is funded by the Committee of Science of the Ministry of Science and Higher Education of the Republic of Kazakhstan (Grant No. AP14871691) and by the FWO Odysseus 1 grant G.0H94.18N: Analysis and Partial Differential Equations. MR was also supported by the EPSRC grant EP/R003025/2 and by the Methusalem programme of the Ghent University Special
Research Fund (BOF) (Grant number 01M01021).}

     \keywords{Sobolev spaces, Sobolev embeddings, Hardy inequality, Rellich inequality, Hardy-Littlewood-Sobolev inequality, Caffarelli-Kohn-Nirenberg inequality, Lie groups}
     \subjclass[2010]{46E35, 22E30, 43A15}

     \begin{abstract} In this paper we establish a number of geometrical
inequalities such as Hardy, Sobolev, Rellich, Hardy-Littlewood-Sobolev, Caffarelli-Kohn-Nirenberg, Gagliardo-Nirenberg inequalities and their critical versions for an ample
class of sub-elliptic differential operators on general connected Lie groups, which include both unimodular and non-unimodular cases in compact and noncompact settings. We also obtain the corresponding uncertainty type principles.

     \end{abstract}
     \maketitle

\section{Introduction}
\label{SEC:intro}
Let $\G$ be a connected Lie group, let $\rho$ and $\lambda$ denote a right and left Haar measure on $\G$, $\delta$ the modular function on $\G$ so that $d \lambda(x)=\delta(x) d \rho(x)$, $x \in \mathbb{G}$. Let $X=\left(X_{1}, \ldots, X_{n} \right)$ be a collection of left-invariant vector fields on $\G$ satisfying H\"{o}rmander's condition.

Let $\chi$ denote a positive continuous character and $e$ the identity element on $\mathbb{G}$, and define $\mu_{\chi}=\chi \rho$ and let $\Delta_{\chi}$ be the differential operator
\begin{equation}\label{Lap_chi}
-\sum_{j=1}^{n}(X_{j}^{2}+X_{j}(\chi)(e) X_{j}).
\end{equation}

It was shown in \cite{HMM05} that $\Delta_\chi$, when initially defined on $C_0^{\infty}(\mathbb{G})$, is essentially self-adjoint in $L^2\left(\mu_\chi\right)$ and conversely, if $\mu$ is a positive Borel measure on $\mathbb{G}$ such that $\Delta_1-X$ is essentially self-adjoint in $L^2(\mu)$, where $X$ is a left-invariant vector field, then there exists a positive continuous character $\chi$ on $\mathbb{G}$ such that $X=\Delta_1-\Delta_\chi$ and $\mu=\mu_\chi$. By a common abuse of notation, from now on we will denote the unique self-adjoint realization of $\Delta_\chi$ by the same symbol. Thus, the family of sub-Laplacians with drift $\Delta_{\chi}$ turns out to be \say{the} natural family of second order differential operators for which it is reasonable to apply functional calculus results and methods to define and study function spaces, and regularity of solutions of differential equations. In particular, when $\chi=\delta$, so that $\mu_{\chi}=\lambda$ is the left Haar measure, then $\Delta_{\delta}$ is the {\em intrinsic} sub-Laplacian, introduced by Agrachev, Boscain, Gauthier and Rossi \cite{ABGR09}.

Sobolev spaces defined in terms of $\Delta_{\chi}$ were introduced and studied in \cite{BPTV18}, where various embedding and algebra properties were proved. When $1<p<\infty$ and $\alpha\geq 0$, the Sobolev spaces $L_{\alpha}^{p}\left(\mu_{\chi}\right)$ were defined as the completion of $C_{0}^{\infty}(\mathbb{G})$ with respect to the norm
$$
\left\| f\right\|_{L_{\alpha}^{p}\left(\mu_{\chi}\right)}:=\left\| f\right\|_{L^{p}(\mu_{\chi})}+\left\| \Delta_{\chi}^{\alpha / 2} f\right\|_{L^{p}(\mu_{\chi})}.
$$
These spaces, in particular these measures, appear naturally in embeddings and algebra properties, where the case $\chi=\delta$, that is, $\mu_{\chi}=\lambda$, plays a fundamental role. In the case when $\G$ is a unimodular group and with $\chi=1$, we note that the spaces $L^{p}_{\alpha}(\mu_{\chi})$ coincide with the Sobolev spaces defined by $\Delta_{1}=-\sum_{j=1}^{n}X_{j}^{2}$ (see \cite{CRT01}). In the case $\chi\neq1$, we note that this operator $\Delta_{1}$ is not symmetric on $L^{2}(\mu_{\chi})$, so that a Sobolev space adapted to $\mu_{\chi}$ when $\chi\neq1$ cannot be defined by means of fractional powers of $\Delta_{1}$. For more details we refer to \cite{HMM05}, \cite{PV17} and \cite{BPTV18}.

It became natural to study geometric inequalities for the scale of Sobolev spaces $L_{\alpha}^{p}\left(\mu_{\chi}\right)$, at least in the cases $1<p<\infty$ and $\alpha>0$.

In the unimodular case and with $\chi=1$, for embedding
theorems for these Sobolev spaces we refer to \cite{Fol75} on stratified group, and to \cite{FR16} and \cite{FR17} on graded groups, as well as to \cite{RY18} for the weighted versions. On general homogeneous groups, we refer to \cite{RS17} and \cite{RSY18}.

In the non-unimodular case, we refer to \cite{Var88} for the first-order Sobolev spaces when $\chi$ is a power of $\delta$, and to \cite{BPTV18} for the higher order case. 

For algebra properties of the Sobolev spaces, we refer to \cite{CRT01} on unimodular groups, and refer to \cite{PV17} and to \cite{BPTV18} on non-unimodular groups.

In this paper, we obtain a number of (versions of) classical geometric inequalities on Sobolev spaces in a unified way, in the setting of general Lie groups, for an ample class of sub-elliptic differential operators.

As usual, in this paper $A\lesssim B$ means that there exists a positive constant $c$ such that $A\leq c B$. If $A\lesssim B$ and $B\lesssim A$, then we write $A\approx B$. In these notations, if the left and right-hand sides feature some functions $f$, the constant (using this notation) does not depend on $f$.

Let us begin with the following {\bf Hardy-Sobolev-Rellich} inequality on general connected Lie groups:
\begin{thm}\label{main1_thm} Let $\G$ be a connected Lie group. Let $e$ be the identity element of $\G$, and let $\chi$ be a positive character of $\G$. Let $|x|:=d_{C}(e,x)$ denote the Carnot-Carath\'{e}odory distance from $e$ to $x$. Let $d$ be the local dimension of $\G$ as recalled in \eqref{V_less1} and $\alpha>0$, $0\leq \beta<d$, $1<p,q<\infty$. Then we have 
\begin{equation}\label{main1_thm_for1}
\left\|\frac{f}{|x|^{\frac{\beta}{q}}}\right\|_{L^{q}(\mu_{\chi^{q/p}\delta^{1-q/p}})}\lesssim
\|f\|_{L^{p}_{\alpha}(\mu_{\chi})}
\end{equation}
for all $q\geq p$ such that $1/p-1/q\leq \alpha/d-\beta/(dq)$.
\end{thm}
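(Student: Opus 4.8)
The plan is to realize $f$ as a fractional integral of $g:=\Delta_{\chi}^{\alpha/2}f$ and then to bound the resulting positive integral operator by the two-weight Hardy inequality on the metric measure space $(\G,d_{C},\rho)$, which carries a polar decomposition around $e$. Writing both sides against the right Haar measure $\rho$, the target \eqref{main1_thm_for1} is equivalent to
$$\|u\,f\|_{L^{q}(\rho)}\lesssim \|v\,g\|_{L^{p}(\rho)},\qquad u:=|x|^{-\beta/q}\chi^{1/p}\delta^{1/q-1/p},\quad v:=\chi^{1/p},$$
since $\|g\|_{L^{p}(\mu_{\chi})}\le\|f\|_{L^{p}_{\alpha}(\mu_{\chi})}$. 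A useful sanity check, and the source of the exponent relation, is the shifted exponent $\bar q:=dq/(d-\beta)$: the hypothesis $1/p-1/q\le\alpha/d-\beta/(dq)$ is precisely $1/p-1/\bar q\le\alpha/d$, and at $\beta=0$ one has $\bar q=q$, so the statement collapses to the embedding \eqref{Sob_emb_unimod1}. This identifies which Sobolev information is available, but it also shows that the weight cannot be inserted by a crude H\"older step: the auxiliary integral produced that way is $\int_{\G}|x|^{-d}\,d\lambda$, which diverges at $e$ by the local-dimension estimate $\rho(B(e,r))\approx r^{d}$ from \eqref{V_less1}. Hence a genuine Hardy estimate, rather than H\"older, is required, and it is natural to work directly with the fractional integral.

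First I would use the subordination formula $\Delta_{\chi}^{-\alpha/2}=c_{\alpha}\int_{0}^{\infty}t^{\alpha/2-1}e^{-t\Delta_{\chi}}\,dt$ together with the Gaussian-type heat kernel bounds for $e^{-t\Delta_{\chi}}$ on $L^{2}(\mu_{\chi})$ from \cite{HMM05,BPTV18} to obtain a pointwise domination
$$|f(x)|\lesssim \int_{\G}\mathcal K_{\alpha}(x,y)\,|g(y)|\,d\mu_{\chi}(y),$$
where $\mathcal K_{\alpha}(x,y)$ behaves like the Riesz kernel $d_{C}(x,y)^{\alpha-d}$ on the local scale $d_{C}(x,y)\le1$ and decays off-diagonal on the global scale. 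Integrating out the spherical variable through the polar decomposition reduces matters to a one-dimensional weighted Hardy operator in the radial variable $r=|x|$, whose mapping property is exactly the content of the two-weight Hardy inequality established in the paper. (Throughout I would first argue on a dense class, e.g.\ $f\in C_{0}^{\infty}(\G)$, and then pass to the limit in the $L^{p}_{\alpha}(\mu_{\chi})$ norm.)

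Then I would apply that two-weight Hardy inequality to the above operator with the weights $u,v$ identified, and verify its necessary-and-sufficient weight condition. On the local scale this is a balance of powers: the Riesz exponent $\alpha-d$, the weight exponent $-\beta$, and the volume exponent $d$ combine exactly when $1/p-1/q\le\alpha/d-\beta/(dq)$, which is why that is the sharp range in part (i); on the global scale the characters $\chi,\delta$ and the group growth absorb the tail, with $\chi$ cancelling in the relevant Muckenhoupt-type quantity precisely because the range measure is $\mu_{\chi^{q/p}\delta^{1-q/p}}$. In part (ii) the assumption $\alpha\ge d/p$ makes the singular kernel locally $L^{p'}$ near the diagonal, so the local obstruction disappears and the weight condition holds for every $q\ge p$, removing the upper bound on $q$. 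I expect the main obstacle to be exactly this verification: one must track the interplay of the local dimension $d$ with the possibly different global growth of $\G$ across the unimodular and non-unimodular, compact and noncompact cases, and handle the borderline equality in the exponent constraint, where the divergence of $\int_{\G}|x|^{-d}\,d\lambda$ signals that only the sharp Hardy condition, and no softer estimate, can close the argument.
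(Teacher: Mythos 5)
Your overall skeleton --- represent $f$ through a potential kernel applied to $g$, then invoke the two-weight integral Hardy inequalities of Theorem \ref{high_Hardy_thm} after checking the Muckenhoupt-type conditions --- is indeed the paper's strategy, and your identification of the weights $u,v$ and the exponent bookkeeping via $\bar q$ is correct. But there is a genuine gap at the heart of the argument: the claim that \enquote{integrating out the spherical variable through the polar decomposition reduces matters to a one-dimensional weighted Hardy operator in $r=|x|$} is false. The kernel depends on $d_{C}(x,y)$ (equivalently on $y^{-1}x$), not on the radii $|x|,|y|$, while the inequalities \eqref{integ_Hardy1}--\eqref{integ_Hardy2} apply only to operators of the form $f\mapsto\int_{B(e,|x|)}f\,dz$ and its complementary version, i.e.\ with integrand independent of $x$. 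To reach that form one must dominate the kernel at $y^{-1}x$ by a function of $x$ alone (or of $y$ alone), and by monotonicity of $A_{\alpha}$ this works only off the diagonal: $2|y|_{R}<|x|_{R}$ forces $|y^{-1}x|_{R}>|x|_{R}/2$, hence the bound $A_{\alpha}(x/2)$, and symmetrically when $|y|_{R}>2|x|_{R}$. The remaining region $|x|_{R}\le 2|y|_{R}\le 4|x|_{R}$ contains the full singularity of the kernel at $y=x$ and cannot be seen by any radial Hardy operator; the paper treats it (the term $M_{2}$ in the proof of Theorem \ref{main2_thm}) by a dyadic annular decomposition combined with the non-unimodular Young inequality (Lemma \ref{lemma4.3}), using that $G^{c}_{\alpha,\chi}\in L^{r}(\lambda)$ for $1+1/q=1/r+1/p$, which is exactly where the exponent hypothesis enters. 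Your observation that the kernel is locally $L^{p'}$ is available only when $\alpha>d/p$, so it cannot substitute for this step in part (i); and part (ii) anyway follows from part (i) for free, since its exponent condition is automatic when $\alpha\ge d/p$ and $\beta<d$.

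A second, more technical flaw: the subordination formula for the Riesz potential $\Delta_{\chi}^{-\alpha/2}$ does not yield a usable kernel on a general noncompact Lie group, because the heat kernel bound of Lemma \ref{lem2.3}(iii) carries a factor $e^{\omega t}$, so $\int_{0}^{\infty}t^{\alpha/2-1}p_{t}^{\chi}(x)\,dt$ need not converge (general Lie groups may have exponential volume growth, unlike the stratified/homogeneous setting). This is precisely why the paper passes to the Bessel potential $(\Delta_{\chi}+cI)^{-\alpha/2}$ with $c>\omega$, legitimate by the norm equivalence \eqref{Sob_norm_Bes1}, whose kernel $G^{c}_{\alpha,\chi}$ has the local Riesz behaviour \emph{and} the exponential off-diagonal decay \eqref{less1} that your sketch takes for granted. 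Relatedly, the paper first reduces to $\chi=\delta$ (Theorem \ref{main2_thm}) via Proposition \ref{prop3.5}, so that all weight conditions are checked against the left Haar measure, with the characters entering only through Lemma \ref{Bes_est_lem} and Lemma \ref{lem2.3}(ii); if you instead carry general $\chi,\delta$ through the Muckenhoupt quantities directly, the assertion that \enquote{the characters absorb the tail} must become an actual estimate of both factors in \eqref{integ_Hardy1_cond} and \eqref{integ_Hardy2_cond} over the ranges $r_{0}\ge 1$ and $0<r_{0}<1$, as the paper does in \eqref{check1}--\eqref{check16_11}.
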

\begin{rem}
Note that in the case $\alpha\geq d/p$ the condition $1/p-1/q\leq \alpha/d-\beta/(dq)$ automatically holds true since
$$\frac{\alpha}{d}-\frac{\beta}{dq}\geq \frac{1}{p}-\frac{\beta}{dq}>\frac{1}{p}-\frac{1}{q},$$
which means the inequality \eqref{main1_thm_for1} holds for all $q\geq p$ when $\alpha\geq d/p$.
\end{rem}
Note that Theorem \ref{main1_thm} when $\beta=0$ implies the {\bf Sobolev embedding} on general connected Lie group
\begin{equation}\label{Sob_emb_unimod1}
L^{p}_{\alpha}(\mu_{\chi})\hookrightarrow L^{q}(\mu_{\chi^{q/p}\delta^{1-q/p}}).
\end{equation}
The Sobolev embedding \eqref{Sob_emb_unimod1} was proved in \cite{BPTV18} in the noncompact case. We also refer to the very recent work \cite{BPV21} for the investigation of the behaviour of the Sobolev embedding constant on a general connected Lie group, endowed with a left Haar measure. On nilpotent Lie groups, we refer to \cite{RY17} and \cite{RTY20} for the best constants in Sobolev, Gagliardo-Nirenberg and their critical cases for general left invariant homogeneous hypoelliptic differential operators.

Furthermore, for $q=p$ and $\beta/q=\alpha$ the inequality \eqref{main1_thm_for1} gives the following inhomogeneous {\bf Hardy inequality} on general connected Lie groups:
\begin{equation}\label{lef_rig_Hardy1}
\left\|\frac{f}{|x|^{\alpha}}\right\|_{L^{p}(\mu_{\chi})}\lesssim \|f\|_{L^{p}_{\alpha}(\mu_{\chi})},
\end{equation}
where $0\leq \alpha<d/p$. In particular, for $\chi=1$ and $\chi=\delta$ (respectively, with $\mu_{1}=\rho$ and $\mu_{\delta}=\lambda$) this gives both right and left versions of Hardy inequalities, respectively. We note that the inequality \eqref{lef_rig_Hardy1} was obtained for $\chi=1$ on stratified (hence also, in particular, nilpotent and unimodular) Lie groups in \cite{CCR15}. Note that when $\alpha=2$ the inequality \eqref{lef_rig_Hardy1} yields the following inhomogeneous {\bf Rellich inequality} on general connected Lie groups:
\begin{equation}\label{lef_rig_Hardy1_rel}
\left\|\frac{f}{|x|^{2}}\right\|_{L^{p}(\mu_{\chi})}\lesssim \|f\|_{L^{p}_{2}(\mu_{\chi})}\approx\left\|f \right\|_{L^{p}(\mu_{\chi})}+\left\|\Delta_{\chi}f \right\|_{L^{p}(\mu_{\chi})}, \; 0\leq 2p<d.
\end{equation}
Since the inequality \eqref{main1_thm_for1} contains the Hardy, Rellich and Sobolev inequalities, we call this inequality Hardy-Sobolev-Rellich inequality. The remaining cases of the inequality \eqref{main1_thm_for1} can be thought of as Sobolev embeddings in weighted $L^{q}$-spaces.

Moreover, we establish the Hardy-Sobolev-Rellich inequality \eqref{main1_thm_for1} in the {\bf critical case $\beta=d$} that involves a
logarithimic factor in the weight:
\begin{thm}\label{main3_thm} Let $\G$ be a connected Lie group. Let $\chi$ be a positive character of $\G$. Let $1<p<r<\infty$ and $1/p+1/p'=1$.
Then we have
\begin{equation}\label{main3_thm_for1}
\left\|\frac{f}{\left(\log\left({\rm e}+\frac{1}{|x|}\right)\right)^{\frac{r}{q}}
|x|^{\frac{d}{q}}}\right\|_{L^{q}(\mu_{\chi^{q/p}\delta^{1-q/p}})}\lesssim \|f\|_{L^{p}_{d/p}(\mu_{\chi})}
\end{equation}
for every $q\in [p,(r-1)p')$.
\end{thm}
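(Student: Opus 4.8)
The plan is to derive the critical inequality \eqref{main3_thm_for1} from the Sobolev embedding \eqref{Sob_emb_unimod1} (taken with $\alpha=d/p$, so that case (ii) of Theorem \ref{main1_thm} makes it available for every $q\geq p$) by a dyadic decomposition around the identity $e$; the logarithmic weight is precisely what forces the resulting series to converge in the borderline range $q<(r-1)p'$. Throughout I use the identity $d\mu_{\chi^{s/p}\delta^{1-s/p}}=(\chi/\delta)^{s/p}\,d\lambda$, so that the target measures for different exponents $s$ differ only by a power of the smooth positive density $\chi/\delta$, which is comparable to $1$ on a fixed neighbourhood of $e$. First I would split $\G=\{|x|_{R}>1\}\cup\{|x|_{R}\leq 1\}$. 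On $\{|x|_{R}>1\}$ the weight $\big(\log(e+1/|x|_{R})\big)^{-r}|x|_{R}^{-d}$ is bounded, so that part of the $q$-th power of the left-hand side of \eqref{main3_thm_for1} is controlled directly by $\|f\|_{L^{q}(\mu_{\chi^{q/p}\delta^{1-q/p}})}^{q}$ and hence, via \eqref{Sob_emb_unimod1}, by $\|f\|_{L^{p}_{d/p}(\mu_{\chi})}^{q}$.

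The heart of the matter is the region $\{|x|_{R}\leq 1\}$, which I would decompose into dyadic annuli $A_{k}=\{2^{-k-1}<|x|_{R}\leq 2^{-k}\}$, $k\geq 0$. On $A_{k}$ one has $|x|_{R}\approx 2^{-k}$, $\log(e+1/|x|_{R})\approx k$, and, by the ball-volume behaviour \eqref{V_less1}, $\lambda(A_{k})\approx 2^{-kd}$. For a parameter $m\geq q$ to be chosen, I would apply Hölder's inequality on $A_{k}$ with exponents $m/q$ and $(m/q)'$, using the exact pointwise identity $\big(|f|^{m}(\chi/\delta)^{m/p}\big)^{q/m}=|f|^{q}(\chi/\delta)^{q/p}$, to get
\begin{equation*}
\int_{A_{k}}|f|^{q}\,d\mu_{\chi^{q/p}\delta^{1-q/p}}\leq \|f\|_{L^{m}(\mu_{\chi^{m/p}\delta^{1-m/p}})}^{q}\,\lambda(A_{k})^{1-q/m}.
\end{equation*}
Combining this with the size of the weight on $A_{k}$ yields
\begin{equation*}
\int_{A_{k}}\frac{|f|^{q}}{\big(\log(e+1/|x|_{R})\big)^{r}|x|_{R}^{d}}\,d\mu_{\chi^{q/p}\delta^{1-q/p}}\lesssim \frac{2^{kdq/m}}{k^{r}}\,\|f\|_{L^{m}(\mu_{\chi^{m/p}\delta^{1-m/p}})}^{q}.
\end{equation*}

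The remaining, and main, ingredient is a quantitative critical Sobolev embedding controlling the dependence of the embedding constant on $m$, namely
\begin{equation*}
\|f\|_{L^{m}(\mu_{\chi^{m/p}\delta^{1-m/p}})}\lesssim m^{1/p'}\,\|f\|_{L^{p}_{d/p}(\mu_{\chi})}\qquad\text{uniformly for }m\geq q.
\end{equation*}
I expect this to be the real obstacle: \eqref{Sob_emb_unimod1} only asserts the existence of a constant for each fixed $m$, whereas here its growth as $m\to\infty$ is decisive, and I would establish the power $m^{1/p'}$ either by tracking constants through the proof of \eqref{Sob_emb_unimod1} or, more robustly, from ultracontractive small-time (local-dimension $d$) bounds for the heat semigroup generated by $\Delta_{\chi}$ together with subordination for $\Delta_{\chi}^{-d/(2p)}$. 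Granting this, I would choose $m=m_{k}\approx k$ on each annulus (with an implied constant depending on $d$ and $q$), so that $2^{kdq/m_{k}}\approx 1$ and $m_{k}^{q/p'}\approx k^{q/p'}$, whence
\begin{equation*}
\int_{A_{k}}\frac{|f|^{q}}{\big(\log(e+1/|x|_{R})\big)^{r}|x|_{R}^{d}}\,d\mu_{\chi^{q/p}\delta^{1-q/p}}\lesssim k^{\,q/p'-r}\,\|f\|_{L^{p}_{d/p}(\mu_{\chi})}^{q}.
\end{equation*}
Summing over $k\geq 1$, the series $\sum_{k}k^{\,q/p'-r}$ converges precisely when $q/p'-r<-1$, that is $q<(r-1)p'$, which is exactly the admissible range in the statement; adding the bounded contribution of $\{|x|_{R}>1\}$ and taking $q$-th roots then gives \eqref{main3_thm_for1} for all $q\in[p,(r-1)p')$.
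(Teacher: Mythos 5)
Your route is genuinely different from the paper's. The paper reduces to $\chi=\delta$ via Proposition \ref{prop3.5}, writes the left-hand side of \eqref{main3_thm_for1} as a weighted $L^{q}$-norm of $u\ast G^{c}_{d/p,\chi}$ with $u=(\Delta_{\chi}+cI)^{d/(2p)}f$, splits the convolution into the three regions $2|y|_{R}<|x|_{R}$, $|x|_{R}\le 2|y|_{R}<4|x|_{R}$, $|y|_{R}>2|x|_{R}$, and handles the first and third by the integral Hardy inequalities of Theorem \ref{high_Hardy_thm} (checking the sup-conditions via Lemmas \ref{lemma4.1} and \ref{Bes_est_lem}) and the middle one by Young's inequality (Lemma \ref{lemma4.3}); the restriction $q<(r-1)p'$ enters there through the finiteness of two weighted kernel integrals. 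Your scheme --- dyadic annuli, H\"older with the exact identity $\bigl(|f|^{m}(\chi/\delta)^{m/p}\bigr)^{q/m}=|f|^{q}(\chi/\delta)^{q/p}$, and the optimization $m\approx k$ --- is combinatorially sound, and it is a good sign that it reproduces exactly the admissible range $q<(r-1)p'$. Note also that the volume bound you need, $\lambda(A_{k})\lesssim 2^{-kd}$ for \emph{Riemannian} annuli (only the upper bound matters, since $1-q/m>0$), is precisely the bound \eqref{check7} that the paper itself uses, so you are on equal footing with the paper on that point.

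The genuine gap is the one you flag yourself: the uniform estimate
\begin{equation*}
\|f\|_{L^{m}(\mu_{\chi^{m/p}\delta^{1-m/p}})}\leq C\, m^{1/p'}\,\|f\|_{L^{p}_{d/p}(\mu_{\chi})}\qquad\text{with } C \text{ independent of } m\geq q
\end{equation*}
cannot be quoted from \eqref{Sob_emb_unimod1}, which asserts a constant for each fixed $m$ with no control on its growth, and the entire critical-case difficulty of Theorem \ref{main3_thm} is concentrated in this uniformity; as written, the proposal reduces the theorem to an unproved quantitative statement that is at least as deep. The gap is fillable with the paper's own tools, along the first route you suggest: reduce to $\chi=\delta$ by Proposition \ref{prop3.5}, take $1/\tilde{r}=1/p'+1/m$ in Young's inequality (Lemma \ref{lemma4.3}), and use Lemma \ref{lemma4.1} (which gives $G^{c}_{d/p,\chi}(x)\lesssim|x|_{R}^{-d/p'}$ for $|x|_{R}\le 1$) together with Lemma \ref{Bes_est_lem} for $|x|_{R}>1$ to compute
\begin{equation*}
\|G^{c}_{d/p,\chi}\|^{\tilde{r}}_{L^{\tilde{r}}(\lambda)}\lesssim \int_{0}^{1}u^{\,d-1-\tilde{r}d/p'}\,du+C=\frac{1}{d\bigl(1-\tilde{r}/p'\bigr)}+C\approx m,
\end{equation*}
and similarly for $\check{G}^{c}_{d/p,\chi}$; since $\|\check{G}^{c}_{d/p,\chi}\|^{\tilde{r}/p'}_{L^{\tilde{r}}(\lambda)}\|G^{c}_{d/p,\chi}\|^{\tilde{r}/m}_{L^{\tilde{r}}(\lambda)}\approx m^{1/p'}\,m^{1/m}\approx m^{1/p'}$, Young's inequality then yields the claimed growth. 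But carrying this out amounts to redoing the paper's kernel analysis with explicit constants: it is the heart of the proof, not a side issue, and your argument is incomplete without it.
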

When $\chi=\delta$, Theorems \ref{main1_thm} and \ref{main3_thm} give the weighted embeddings with the same measure, namely with the left Haar measure (see Theorem \ref{main2_thm} and \ref{main4_thm}), which is
the unique case when such embeddings hold true as in the unweighted case. Moreover, for $q=p$ this gives the following {\bf critical Hardy inequality} on general connected Lie groups:
\begin{equation}\label{lef_rig_Hardy1_crit}
\left\|\frac{f}{\left(\log\left({\rm e}+\frac{1}{|x|}\right)\right)^{\frac{r}{p}}
|x|^{\frac{d}{p}}}\right\|_{L^{p}(\mu_{\chi})}\lesssim \|f\|_{L^{p}_{d/p}( \mu_{\chi})},
\end{equation}
where $1<p<r<\infty$,
which is a critical case $\alpha=d/p$ of the Hardy inequality given in \eqref{lef_rig_Hardy1}.
\begin{rem} First, we will prove the above theorems in the noncompact case. Consequently, in Section \ref{SEC:app_compact}, we show that these Theorems \ref{main1_thm} and \ref{main3_thm} with $\delta=1$ hold on compact Lie groups (which are automatically unimodular) as well.
\end{rem}
We also show that Theorem \ref{main1_thm} gives the following fractional {\bf Caffarelli-Kohn-Nirenberg type inequality}:
\begin{thm}\label{CKN_thm} Let $\G$ be a connected Lie group. Let $e$ be the identity element of $\G$, and let $\chi$ be a positive character of $\G$. Let $|x|:=d_{C}(e,x)$ denote the Carnot-Carath\'{e}odory distance from $e$ to $x$. Let $1<p<\infty$, $0<q,r<\infty$ and $0<\theta\leq 1$ be such that $\theta>(r-q)/r$ and $p\leq q\theta r/(q-(1-\theta)r)$. Let $a$ and $b$ be real numbers and $\alpha>0$ such that $0\leq qr(b(1-\theta)-a)/(q-(1-\theta)r)<d$ and $1/p-(q-(1-\theta)r)/(qr\theta)\leq \alpha/d-(b(1-\theta)-a)/(\theta d)$. Then we have
\begin{equation}\label{CKN}
\||x|^{a}f\|_{L^{r}(\mu_{\chi^{\widetilde{q}/p}\delta^{1-\widetilde{q}/p}})} \lesssim \|f\|^{\theta}_{L^{p}_{\alpha}(\mu_{\chi})}
\||x|^{b}f\|^{1-\theta}_{L^{q}(\mu_{\chi^{\widetilde{q}/p}\delta^{1-\widetilde{q}/p}})},
\end{equation}
where $\widetilde{q}:=\frac{qr\theta}{q-(1-\theta)r}$.
\end{thm}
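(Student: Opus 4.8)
The plan is to derive \eqref{CKN} from the Hardy-Sobolev inequality of Theorem \ref{main1_thm} by a single application of Hölder's inequality, with all the exponent constraints in the hypotheses designed precisely so that the two ingredients fit together. The degenerate case $\theta=1$ is immediate: then $\widetilde q=r$ and \eqref{CKN} reduces to \eqref{main1_thm_for1}, so I would assume $0<\theta<1$ from now on.

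First I would factor the integrand. Setting
\[
c:=\frac{a-(1-\theta)b}{\theta},
\]
one has $\theta c+(1-\theta)b=a$, hence the pointwise identity $|x|^{a}|f|=\big(|x|^{c}|f|\big)^{\theta}\big(|x|^{b}|f|\big)^{1-\theta}$, and therefore
\[
\int \big(|x|^{a}|f|\big)^{r}\,d\mu_{\chi^{\widetilde q/p}\delta^{1-\widetilde q/p}}
=\int \big(|x|^{c}|f|\big)^{\theta r}\big(|x|^{b}|f|\big)^{(1-\theta)r}\,d\mu_{\chi^{\widetilde q/p}\delta^{1-\widetilde q/p}}.
\]
Next I would apply Hölder's inequality with the conjugate exponents $s=q/(q-(1-\theta)r)$ and $s'=q/((1-\theta)r)$; the assumption $\theta>(r-q)/r$ is exactly what guarantees $q-(1-\theta)r>0$, so that $s,s'>1$ are genuine conjugate exponents. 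A short computation gives $\theta rs=\widetilde q$ and $(1-\theta)rs'=q$, together with $\widetilde q/s=\theta r$ and $q/s'=(1-\theta)r$, whence, after taking $r$-th roots,
\[
\left\||x|^{a}f\right\|_{L^{r}(\mu_{\chi^{\widetilde q/p}\delta^{1-\widetilde q/p}})}
\le \left\||x|^{c}f\right\|_{L^{\widetilde q}(\mu_{\chi^{\widetilde q/p}\delta^{1-\widetilde q/p}})}^{\theta}\,
\left\||x|^{b}f\right\|_{L^{q}(\mu_{\chi^{\widetilde q/p}\delta^{1-\widetilde q/p}})}^{1-\theta}.
\]

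Finally I would control the first factor by the Hardy-Sobolev inequality. Writing $|x|^{c}=|x|^{-\beta/\widetilde q}$ with $\beta:=-c\widetilde q=\widetilde q\,(b(1-\theta)-a)/\theta$, one checks that $\beta=qr(b(1-\theta)-a)/(q-(1-\theta)r)$, so the hypothesis $0\le qr(b(1-\theta)-a)/(q-(1-\theta)r)<d$ says exactly $0\le\beta<d$. Moreover $\beta/(d\widetilde q)=(b(1-\theta)-a)/(\theta d)$ and $1/p-1/\widetilde q=1/p-(q-(1-\theta)r)/(qr\theta)$, so the hypothesis $1/p-(q-(1-\theta)r)/(qr\theta)\le \alpha/d-(b(1-\theta)-a)/(\theta d)$ is precisely the admissibility condition $1/p-1/\widetilde q\le \alpha/d-\beta/(d\widetilde q)$ of Theorem \ref{main1_thm}(i), while $p\le\widetilde q$ is ensured by $p\le q\theta r/(q-(1-\theta)r)$. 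Applying \eqref{main1_thm_for1} with the exponent $\widetilde q$ in place of $q$ and weight parameter $\beta$ therefore gives
\[
\left\||x|^{c}f\right\|_{L^{\widetilde q}(\mu_{\chi^{\widetilde q/p}\delta^{1-\widetilde q/p}})}\lesssim \|f\|_{L^{p}_{\alpha}(\mu_{\chi})},
\]
and substituting this into the displayed bound yields \eqref{CKN}.

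I expect the only real work to be the bookkeeping in the last paragraph: verifying that the three numerical constraints of Theorem \ref{main1_thm} (the range $0\le\beta<d$, the gain condition, and $p\le\widetilde q$) translate verbatim into the stated hypotheses on $\theta$, $a$, $b$, $p$, $q$, $r$, $\alpha$. There is no analytic obstacle beyond this elementary exponent-matching, since the polar-decomposition and metric-measure machinery is entirely hidden inside the Hardy-Sobolev estimate being invoked.
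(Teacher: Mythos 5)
Your proof is correct and takes essentially the same route as the paper: the identical H\"older step with conjugate exponents $q/(q-(1-\theta)r)$ and $q/((1-\theta)r)$, producing the factor $\left\||x|^{-(b(1-\theta)-a)/\theta}f\right\|^{\theta}_{L^{\widetilde{q}}(\mu_{\chi^{\widetilde{q}/p}\delta^{1-\widetilde{q}/p}})}$, which is then controlled by Theorem \ref{main1_thm} applied with exponent $\widetilde{q}$ and weight parameter $\beta=qr(b(1-\theta)-a)/(q-(1-\theta)r)$. Your exponent bookkeeping (the range $0\leq\beta<d$, the gain condition, and $p\leq\widetilde{q}$) matches the paper's verification verbatim, so nothing further is needed.
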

\begin{rem}
Note that when $\theta=1$ then $\theta>(r-q)/r$ automatically holds, $\widetilde{q}=r$, $p\leq q\theta r/(q-(1-\theta)r)$ gives $p\leq r$, while conditions $0\leq qr(b(1-\theta)-a)/(q-(1-\theta)r)<d$ and $1/p-(q-(1-\theta)r)/(qr\theta)\leq \alpha/d-(b(1-\theta)-a)/(\theta d)$ are equivalent to $0\leq -ar<d$ and $1/p-1/r\leq \alpha/d+a/d$, respectively. Then, in this case, the inequality \eqref{CKN} has the following form
$$\||x|^{a}f\|_{L^{r}(\mu_{\chi^{r/p}\delta^{1-r/p}})} \lesssim \|f\|_{L^{p}_{\alpha}(\mu_{\chi})},
$$
which is \eqref{main1_thm_for1}.
\end{rem}
\begin{rem} We note that if we take $a=b=0$ in \eqref{CKN}, then it gives the Gagliardo-Nirenberg type inequality on general connected Lie
groups: Let $1<p<\infty$, $0<q,r<\infty$, $0<\theta\leq 1$ and $\alpha>0$ be such that $\theta>(r-q)/r$, $p\leq q\theta r/(q-(1-\theta)r)$ and $1/p-(q-(1-\theta)r)/(qr\theta)\leq \alpha/d$. Then we have the following {\bf Gagliardo-Nirenberg inequality}:
\begin{equation}\label{CKN_GN_cor}
\|f\|_{L^{r}(\mu_{\chi^{\widetilde{q}/p}\delta^{1-\widetilde{q}/p}})} \lesssim \|f\|^{\theta}_{L^{p}_{\alpha}(\mu_{\chi})}
\|f\|^{1-\theta}_{L^{q}(\mu_{\chi^{\widetilde{q}/p}\delta^{1-\widetilde{q}/p}})},
\end{equation}
where $\widetilde{q}:=\frac{qr\theta}{q-(1-\theta)r}$.
\end{rem}

Similarly, from \eqref{main3_thm_for1} we can obtain the inequality \eqref{CKN} in the {\bf critical case $a=b(1-\theta)-d(q-(1-\theta)r)/qr$}:
 \begin{thm}\label{CKN_thm_crit} Let $\G$ be a connected Lie group. Let $\chi$ be a positive character of $\G$. Let $b\in\mathbb{R}$, $1<p<r<\infty$, $0<q,r<\infty$ and $0<\theta\leq 1$ be such that $\theta>(r-q)/r$ and $p\leq \widetilde{q}<(r-1)p^{\prime}$ with $p^{\prime}=p/(p-1)$ and $\widetilde{q}:=\frac{qr\theta}{q-(1-\theta)r}$. Then we have
\begin{equation}\label{crit_CKN}
\|\omega_{r}^{b(1-\theta)-d\theta/\widetilde{q}}f\|_{L^{r}( \mu_{\chi^{\widetilde{q}/p}\delta^{1-\widetilde{q}/p}})} \lesssim \|f\|^{\theta}_{L^{p}_{d/p}(\mu_{\chi})}
\|\omega_{r}^{b}f\|^{1-\theta}_{L^{q}( \mu_{\chi^{\widetilde{q}/p}\delta^{1-\widetilde{q}/p}})},
\end{equation}
where $\omega_{r}:=(\log({\rm e}+1/|x|)^{\frac{r}{d}}|x|$.
\end{thm}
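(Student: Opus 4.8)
The plan is to deduce \eqref{crit_CKN} from the critical Hardy inequality of Theorem \ref{main3_thm} by a single application of Hölder's inequality, exactly as the subcritical Caffarelli-Kohn-Nirenberg inequality \eqref{CKN} follows from \eqref{main1_thm_for1}. First I would rewrite \eqref{main3_thm_for1} in terms of $\omega_{r}=(\log(e+1/|x|_{R}))^{r}|x|_{R}^{d}$: since $(\log(e+1/|x|_{R}))^{r/q}|x|_{R}^{d/q}=\omega_{r}^{1/q}$, the case $p=p_{1}$, $q=\widetilde{q_{1}}$ of Theorem \ref{main3_thm} reads
\begin{equation}\label{crit_hardy_rewritten}
\|\omega_{r}^{-1/\widetilde{q_{1}}}f\|_{L^{\widetilde{q_{1}}}(\mu_{\chi^{\widetilde{q_{1}}/p_{1}}\delta^{1-\widetilde{q_{1}}/p_{1}}})}\lesssim \|f\|_{L^{p_{1}}_{d/p_{1}}(\mu_{\chi})},
\end{equation}
and the standing hypothesis $p_{1}\leq\widetilde{q_{1}}<(r-1)p_{1}'$ is exactly the admissibility range $\widetilde{q_{1}}\in[p_{1},(r-1)p_{1}')$ required for \eqref{crit_hardy_rewritten} to hold.

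The key step is to pick the Hölder exponents forced by the definition of $\widetilde{q_{1}}$. Writing $\nu:=\mu_{\chi^{\widetilde{q_{1}}/p_{1}}\delta^{1-\widetilde{q_{1}}/p_{1}}}$, a direct computation from $\widetilde{q_{1}}=\frac{q_{1}r_{1}\theta}{q_{1}-(1-\theta)r_{1}}$ gives
$$\frac{\theta r_{1}}{\widetilde{q_{1}}}+\frac{(1-\theta)r_{1}}{q_{1}}=1,$$
so that $\frac{\widetilde{q_{1}}}{\theta r_{1}}$ and $\frac{q_{1}}{(1-\theta)r_{1}}$ are conjugate; the assumption $\theta>(r_{1}-q_{1})/r_{1}$, i.e.\ $q_{1}>(1-\theta)r_{1}$, makes both exponents positive and (for $\theta<1$) larger than $1$. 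I would then factor the integrand of $\|\omega_{r}^{a(1-\theta)-\theta/\widetilde{q_{1}}}f\|_{L^{r_{1}}(\nu)}^{r_{1}}$ as
$$\omega_{r}^{(a(1-\theta)-\theta/\widetilde{q_{1}})r_{1}}|f|^{r_{1}}=\left(\omega_{r}^{-\theta r_{1}/\widetilde{q_{1}}}|f|^{\theta r_{1}}\right)\left(\omega_{r}^{a(1-\theta)r_{1}}|f|^{(1-\theta)r_{1}}\right),$$
noting that the two powers of $\omega_{r}$ indeed sum to $(a(1-\theta)-\theta/\widetilde{q_{1}})r_{1}$.

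Applying Hölder's inequality with the pair $\bigl(\tfrac{\widetilde{q_{1}}}{\theta r_{1}},\tfrac{q_{1}}{(1-\theta)r_{1}}\bigr)$ to this product and taking $r_{1}$-th roots produces
$$\|\omega_{r}^{a(1-\theta)-\theta/\widetilde{q_{1}}}f\|_{L^{r_{1}}(\nu)}\leq \|\omega_{r}^{-1/\widetilde{q_{1}}}f\|_{L^{\widetilde{q_{1}}}(\nu)}^{\theta}\,\|\omega_{r}^{a}f\|_{L^{q_{1}}(\nu)}^{1-\theta},$$
because the first factor carries $\omega_{r}^{-1}$ raised to $\widetilde{q_{1}}$ and the second carries $\omega_{r}^{aq_{1}}$ raised to $q_{1}$. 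Bounding the first factor on the right by \eqref{crit_hardy_rewritten} and leaving the second untouched then yields \eqref{crit_CKN}. When $\theta=1$ the second factor drops out and the statement reduces to \eqref{crit_hardy_rewritten} itself (with $r_{1}=\widetilde{q_{1}}$).

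I do not expect any genuine obstacle here: once Theorem \ref{main3_thm} is available the argument is a purely algebraic interpolation, and the only care needed is in the bookkeeping of exponents. The one point worth checking is the coincidence of measures, since Hölder's inequality is applied with respect to a single measure: the weighted $L^{\widetilde{q_{1}}}$ and $L^{q_{1}}$ norms appearing after Hölder, and the measure on the left-hand side of \eqref{main3_thm_for1}, must all be the one measure $\nu=\mu_{\chi^{\widetilde{q_{1}}/p_{1}}\delta^{1-\widetilde{q_{1}}/p_{1}}}$, which is precisely what forces the choice $q=\widetilde{q_{1}}$ and hence the hypothesis $p_{1}\leq\widetilde{q_{1}}<(r-1)p_{1}'$.
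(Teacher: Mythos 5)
Your proof is correct and takes essentially the same route as the paper: the paper obtains Theorem \ref{CKN_thm_crit} by exactly this H\"older-plus-critical-Hardy argument (it is stated to follow from Theorem \ref{main3_thm} in the same way Theorem \ref{CKN_thm} follows from \eqref{main1_thm_for1}), and your conjugate pair $\widetilde{q_{1}}/(\theta r_{1})=q_{1}/(q_{1}-(1-\theta)r_{1})$ and $q_{1}/((1-\theta)r_{1})$ coincides with the exponents used there. Your exponent bookkeeping, the identification of the single measure $\mu_{\chi^{\widetilde{q_{1}}/p_{1}}\delta^{1-\widetilde{q_{1}}/p_{1}}}$, and the observation that $p_{1}\leq\widetilde{q_{1}}<(r-1)p_{1}^{\prime}$ is precisely the admissibility range needed to invoke Theorem \ref{main3_thm} with $p=p_{1}$, $q=\widetilde{q_{1}}$ are all accurate.
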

We also introduce the following {\bf Hardy-Littlewood-Sobolev inequality}:
\begin{thm}\label{HLS_thm} Let $\G$ be a connected Lie group. Let $e$ be the identity element of $\G$, and let $\chi$ be a positive character of $\G$. Let $|x|:=d_{C}(e,x)$ denote the Carnot-Carath\'{e}odory distance from $e$ to $x$. Let $1<p,q<\infty$, $\alpha\geq 0$ and $0\leq \beta<d/q$. Let $0\leq a_{1}<dp/(p+q)$, $a_{2}>0$ with $0\leq 1/p-q/(p+q)\leq \alpha/d$ and $1/q-p/(p+q)\leq (a_{2}-a_{1})/d$. Then we have
\begin{equation}\label{HLS_thm_ineq}
\left|\int_{\G}\int_{\G}\frac{\overline{f(x)}g(y)G_{a_{2},\chi}^{c}(y^{-1}x)}{|x|^{a_{1}}|y|^{\beta}}d\mu_{\chi^{(p+q)/pq}\delta^{1-(p+q)/pq}}(x)d\rho(y)\right|\lesssim \|f\|_{L^{p}_{\alpha}(\mu_{\chi})}\|g\|_{L^{q}_{\beta}(\mu_{\chi})},
\end{equation}
where $G_{a_{2},\chi}^{c}$ is defined in \eqref{G_bessel_rep}. In particular, $G_{a_{2},\chi}^{c}$ is the convolution kernel of the operator $(\Delta_{\chi}+cI)^{-a_{2}/2}$, i.e. $(\Delta_{\chi}+cI)^{-a_{2}/2}f=f\ast G_{a_{2},\chi}^{c}$.
\end{thm}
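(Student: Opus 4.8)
The plan is to read the left-hand side of \eqref{HLS_thm_ineq} as a duality pairing, to strip off the weight $|y|^{-\beta}$ by the Hardy inequality \eqref{lef_rig_Hardy1}, to recognize the inner convolution as the Bessel potential $(\Delta_\chi+cI)^{-a_2/2}$, and then to estimate the resulting pairing by a single weighted H\"older inequality followed by two applications of the Hardy--Sobolev embedding of Theorem \ref{main1_thm}. The hypotheses on $p,q,\alpha,\beta,a_1,a_2$ are tailored so that one canonical choice of exponents makes both applications of Theorem \ref{main1_thm} admissible.

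Working first with $f,g\in C_0^\infty(\G)$ and extending by density, I would set $\nu:=\mu_{\chi^{(p+q)/pq}\delta^{1-(p+q)/pq}}$ and $\tilde g:=g/|\cdot|^{\beta}$. Since $d\nu=\chi^{1/p+1/q}\delta^{1-1/p-1/q}\,d\rho$ and the inner $y$-integral $\int_\G \tilde g(y)G_{a_2,\chi}^{c}(y^{-1}x)\,d\rho(y)$ is precisely $(\tilde g\ast G_{a_2,\chi}^{c})(x)=[(\Delta_\chi+cI)^{-a_2/2}\tilde g](x)=:h(x)$, the left-hand side of \eqref{HLS_thm_ineq} equals $\bigl|\int_\G |x|^{-a_1}\,\overline{f(x)}\,h(x)\,d\nu(x)\bigr|$. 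The Hardy inequality \eqref{lef_rig_Hardy1}, valid because $0\le\beta<d/q$, gives $\|\tilde g\|_{L^q(\mu_\chi)}\lesssim\|g\|_{L^q_\beta(\mu_\chi)}$, while the mapping property of the Bessel potential from \cite{BPTV18}, namely that $(\Delta_\chi+cI)^{-a_2/2}$ is bounded from $L^q(\mu_\chi)$ into $L^q_{a_2}(\mu_\chi)$, yields $\|h\|_{L^q_{a_2}(\mu_\chi)}\lesssim\|\tilde g\|_{L^q(\mu_\chi)}\lesssim\|g\|_{L^q_\beta(\mu_\chi)}$.

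Next I would split the pairing by a weighted H\"older inequality with the conjugate exponents $m=(p+q)/q$ and $m'=(p+q)/p$, factoring the density as $\chi^{1/p+1/q}\delta^{1-1/p-1/q}=\bigl(\chi^{m/p}\delta^{1-m/p}\bigr)^{1/m}\bigl(\chi^{m'/q}\delta^{1-m'/q}\bigr)^{1/m'}$ and assigning the whole weight $|x|^{-a_1}$ to $h$, which gives
\[
\left|\int_\G |x|^{-a_1}\,\overline{f}\,h\,d\nu\right|
\le \|f\|_{L^{m}(\mu_{\chi^{m/p}\delta^{1-m/p}})}\,\left\|\frac{h}{|x|^{a_1}}\right\|_{L^{m'}(\mu_{\chi^{m'/q}\delta^{1-m'/q}})}.
\]
The first factor is controlled by the unweighted embedding \eqref{Sob_emb_unimod1} (the case $\beta=0$, $q=m$ of Theorem \ref{main1_thm}): the requirements $m\ge p$ and $1/p-1/m=1/p-q/(p+q)\le\alpha/d$ are exactly the assumptions $0\le 1/p-q/(p+q)\le\alpha/d$ (the first of which is equivalent to $1/p+1/q\ge1$, i.e. $m\ge p$), so $\|f\|_{L^{m}(\mu_{\chi^{m/p}\delta^{1-m/p}})}\lesssim\|f\|_{L^p_\alpha(\mu_\chi)}$. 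The second factor is controlled by the weighted inequality \eqref{main1_thm_for1} applied with $p,\alpha,q$ replaced by $q,a_2,m'$ and with weight exponent $a_1$ (i.e. the parameter of Theorem \ref{main1_thm} equal to $m'a_1$): the conditions $0\le m'a_1<d$ and $1/q-1/m'=1/q-p/(p+q)\le a_2/d-a_1/d$ are precisely $0\le a_1<dp/(p+q)$ and $1/q-p/(p+q)\le(a_2-a_1)/d$, so $\bigl\||x|^{-a_1}h\bigr\|_{L^{m'}(\mu_{\chi^{m'/q}\delta^{1-m'/q}})}\lesssim\|h\|_{L^q_{a_2}(\mu_\chi)}$. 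Chaining these with the bound on $\|h\|_{L^q_{a_2}(\mu_\chi)}$ from the previous paragraph yields \eqref{HLS_thm_ineq}.

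The main obstacle I expect is not the exponent bookkeeping, since the single choice $m=(p+q)/q$ is forced by the hypotheses and makes both invocations of Theorem \ref{main1_thm} legitimate, but rather justifying the two reductions in the second paragraph. The passage from $g$ to $h$ relies on the Bessel-potential boundedness $(\Delta_\chi+cI)^{-a_2/2}\colon L^q(\mu_\chi)\to L^q_{a_2}(\mu_\chi)$, which in turn rests on the $L^q(\mu_\chi)$-boundedness of the spectral multipliers $(\Delta_\chi+cI)^{-a_2/2}$ and $\Delta_\chi^{a_2/2}(\Delta_\chi+cI)^{-a_2/2}$ for the chosen constant $c$, as provided by the functional calculus for $\Delta_\chi$ in \cite{HMM05} and \cite{BPTV18}. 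Moreover, the interchange of integration identifying the inner convolution with $(\Delta_\chi+cI)^{-a_2/2}\tilde g$ must be justified, e.g.\ by Fubini on $C_0^\infty$ data together with the local integrability and decay of $G_{a_2,\chi}^{c}$, before passing to the general case by density.
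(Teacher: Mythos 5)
Your proposal is correct and follows essentially the same route as the paper: both read the left-hand side as the pairing of $\overline{f}$ against $|x|^{-a_1}\bigl((g/|\cdot|^{\beta})\ast G^{c}_{a_2,\chi}\bigr)$, apply H\"older with the conjugate exponents $(p+q)/q$ and $(p+q)/p$ with respect to $\mu_{\chi^{(p+q)/pq}\delta^{1-(p+q)/pq}}$ (your ``weighted H\"older'' with factored densities is literally this, since both of your factored measures coincide with $\nu$), then control the $f$-factor by the unweighted case of Theorem \ref{main1_thm}, the $h$-factor by the weighted case via $\|h\|_{L^{q}_{a_2}(\mu_\chi)}\approx\|g/|\cdot|^{\beta}\|_{L^{q}(\mu_\chi)}$ from \eqref{Sob_norm_Bes1}--\eqref{Sob_norm_Bes2}, and remove the weight $|y|^{-\beta}$ by the Hardy inequality \eqref{lef_rig_Hardy1}. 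The only differences are cosmetic (you apply Hardy to $g$ first rather than last, and you spell out the Bessel-potential boundedness and Fubini/density justifications that the paper leaves implicit).
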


Moreover, we show that Theorems \ref{main1_thm} and \ref{main3_thm} imply the following {\bf uncertainty type principles}:
\begin{cor}\label{uncer_cor} Let $\G$ be a connected Lie group. Let $\chi$ be a positive character of $\G$. Let $1/p+1/p'=1$ and $1/q+1/q'=1$.
\begin{itemize} \item If $0\leq \beta<d$, $1<p,q<\infty$ and $\alpha>0$, then we have
\begin{equation}\label{uncer_1}
\|f\|_{L^{p}_{\alpha}(\mu_{\chi})}\||x|^{\frac{\beta}{q}}f\|_{L^{q^{\prime}}(\mu_{\chi^{q/p}\delta^{1-q/p}})}\gtrsim
\|f\|^{2}_{L^{2}(\mu_{\chi^{q/p}\delta^{1-q/p}})}
\end{equation}
for all $q\geq p$ such that $1/p-1/q\leq \alpha/d-\beta/(dq)$, where $|x|:=d_{C}(e,x)$ is the Carnot-Carath\'{e}odory distance from the identity element $e$ to $x$;
\item If $1<p<r<\infty$, then we have
\begin{equation}\label{uncer_2}
\|f\|_{L^{p}_{d/p}( \mu_{\chi})}\left\|\left(\log\left({\rm e}+\frac{1}{|x|}\right)\right)^{\frac{r}{p}}
|x|^{\frac{d}{p}}f\right\|_{L^{q^{\prime}}(B_1, \mu_{\chi^{q/p}\delta^{1-q/p}})}\gtrsim
\|f\|^{2}_{L^{2}(\mu_{\chi^{q/p}\delta^{1-q/p}})}
\end{equation}
for all $q\in [p,(r-1)p')$.
\end{itemize}
\end{cor}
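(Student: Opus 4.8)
The plan is to obtain both inequalities as immediate consequences of Theorems \ref{main1_thm} and \ref{main3_thm}, using H\"older's inequality as the only extra tool. Throughout, write $\nu:=\mu_{\chi^{q/p}\delta^{1-q/p}}$ for the measure occurring on the right-hand sides of \eqref{uncer_1} and \eqref{uncer_2}; this is precisely the measure appearing on the left-hand sides of \eqref{main1_thm_for1} and \eqref{main3_thm_for1}, which is what makes the pairing below exact.

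For the first assertion, I would start from the trivial factorization
\[
|f|^{2}=\frac{|f|}{|x|^{\beta/q}}\cdot\big(|x|^{\beta/q}|f|\big),
\]
integrate against $\nu$, and apply H\"older's inequality with the conjugate exponents $q$ and $q'$ (so that $1/q+1/q'=1$). This gives
\[
\|f\|_{L^{2}(\nu)}^{2}=\int_{\G}|f|^{2}\,d\nu\leq\left\|\frac{f}{|x|^{\beta/q}}\right\|_{L^{q}(\nu)}\left\||x|^{\beta/q}f\right\|_{L^{q'}(\nu)}.
\]
The first factor on the right is exactly the left-hand side of the Hardy--Sobolev inequality \eqref{main1_thm_for1}, so under the hypotheses of the corollary (which coincide with those of Theorem \ref{main1_thm}: $0\le\beta<d$, $1<p,q<\infty$, $0<\alpha<d$, $q\ge p$ and $1/p-1/q\le\alpha/d-\beta/(dq)$) it is controlled by $\|f\|_{L^{p}_{\alpha}(\mu_{\chi})}$. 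Substituting this bound and rearranging yields \eqref{uncer_1}.

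The second assertion is proved in exactly the same way, with $|x|^{\beta/q}$ replaced by the critical weight $w$ equal to the reciprocal of the denominator weight in \eqref{main3_thm_for1}, and the Hardy--Sobolev bound replaced by the critical Hardy inequality of Theorem \ref{main3_thm}. Factorizing $|f|^{2}=(|f|/w)\,(w|f|)$ and applying H\"older with exponents $q$ and $q'$ gives
\[
\|f\|_{L^{2}(\nu)}^{2}\leq\left\|\frac{f}{w}\right\|_{L^{q}(\nu)}\|wf\|_{L^{q'}(\nu)}\lesssim\|f\|_{L^{p}_{d/p}(\mu_{\chi})}\,\|wf\|_{L^{q'}(\nu)},
\]
where the last step is \eqref{main3_thm_for1}, valid on the stated range $q\in[p,(r-1)p')$; rearranging produces \eqref{uncer_2}.

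There is no genuine obstacle here: the corollary is entirely carried by Theorems \ref{main1_thm} and \ref{main3_thm}, and the uncertainty form is simply the self-dual $L^{q}\times L^{q'}\to L^{1}$ pairing built into H\"older's inequality. The only points needing care are verifying that the exponent ranges in \eqref{uncer_1} and \eqref{uncer_2} match the admissible ranges of the two theorems (so that the $L^{q}(\nu)$ factor is indeed controlled), and first establishing the estimates for $f$ in a dense class on which all three norms are simultaneously finite, the general case then following by the usual approximation argument.
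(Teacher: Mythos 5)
Your proposal is correct and is essentially the paper's own proof: the paper likewise combines the Hardy--Sobolev inequality \eqref{main1_thm_for1} (respectively Theorem \ref{main3_thm}) with H\"older's inequality for the conjugate pair $(q,q')$ with respect to the measure $\mu_{\chi^{q/p}\delta^{1-q/p}}$, merely writing the chain of inequalities starting from the left-hand side of \eqref{uncer_1} rather than from the factorization of $|f|^{2}$. The two arguments are the same up to the order of presentation.
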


On compact Lie groups, note that all the above results still hold true with $\delta=1$, which we will discuss in Section \ref{SEC:app_compact}.

The main novelty of the paper is the extension to the case of general connected Lie groups and
to the case of sub-Laplacians with drift, of results proved by various people, including the authors
of this paper, in the cases of stratified, and general nilpotent Lie groups, and also
compact groups, in the case of sub-Laplacians.

The organisation of the paper is as follows. In Section \ref{SEC:prelim} we briefly recall some known properties of Sobolev spaces on connected Lie groups. Then, in Section \ref{SEC:Weight_Sob} we prove Theorems \ref{main1_thm}, \ref{main3_thm}, \ref{CKN_thm}, \ref{CKN_thm_crit}, \ref{HLS_thm} and Corollary \ref{uncer_cor}. Finally, in Section \ref{SEC:app_compact} we discuss the obtained results of Section \ref{SEC:Weight_Sob} on compact Lie groups.
\section{Preliminaries}
\label{SEC:prelim}
In this section we very briefly recall some known properties of Sobolev spaces on connected Lie groups.

Let $\G$ be a noncompact connected Lie group with identity $e$. Let us denote the right and left Haar measure by $\rho$ and $\lambda$, respectively. Let $\delta$ be the modular function, i.e. the function on $\G$ such that
\begin{equation}\label{lam_rho}
d \lambda=\delta d \rho.
\end{equation}
Then, recall that $\delta$ is a smooth positive character of $\G$, i.e. a smooth homomorphism of $\G$ into the multiplicative group $\mathbb{R}^{+}$. Let $\chi$ be a continuous positive character of $\G$, which is then automatically smooth. Let $\mu_{\chi}$ be a measure with density $\chi$ with respect to $\rho$,
\begin{equation}\label{mu_rho}
d\mu_{\chi}=\chi d\rho.
\end{equation}
Then, by above \eqref{lam_rho} and \eqref{mu_rho} we see that $\mu_{\delta}=\lambda$ and $\mu_{1}=\rho$.

Let $X=\{X_{1},\ldots, X_{n}\}$ be a family of left-invariant, linearly independent vector
fields which satisfy H\"{o}rmander's condition. We recall that these vector fields induce the Carnot-Carath\'{e}odory distance $d_{C}(\cdot,\cdot)$. Let $B=B(c_{B}, r_{B})$ be a ball with respect to such distance, where $c_{B}$ and $r_{B}$ are its centre and radius, respectively. We write $|x|:=d_{C}(e,x)$. If $V(r)=\rho(B_{r})$ is the volume of the ball $B(e, r)=:B_{r}$ with respect to the right Haar measure $\rho$, then it is well-known (see e.g. \cite{Gui73} or \cite{Var88}) that there exist two constants $d\in \mathbb{N}^{*}$ and $D>0$ such that
\begin{equation}\label{V_less1}
V(r)\approx r^{d}\;\;\;\forall r\in(0,1],
\end{equation}
\begin{equation}\label{V_great1}
V(r)\lesssim {\rm e}^{Dr}\;\;\;\forall r\in(1,\infty).
\end{equation}
We say that $d$ and $D$ are local and global dimensions of the metric measure space $(\G, d_{C}, \rho)$, respectively. Recall that $d$ is uniquely determined by $\G$ and $X$, while the set of $D>0$ such that \eqref{V_great1} holds is independent of $X$ but does not have a minimum in general, see e.g. \cite[page 285]{CRT01}, \cite[Chapter 4]{VCS92} or \cite{BPV21}. We fix a $D>0$ such that \eqref{V_great1} holds. One can observe that the metric measure space $(\G,d_{C},\rho)$ is locally doubling, but not doubling in general. 

We shall denote by $\Delta_{1}$ the smallest self-adjoint extension on $L^{2}(\rho)$ of the \enquote{sum-of-squares} operator
$$\Delta_{1}=-\sum_{j=1}^{n}X_{j}^{2}$$
on $C^{\infty}_{0}(\G)$. We shall denote with $P_{t}(\cdot,\cdot)$ and $p_{t}$ the smooth integral kernel of ${\rm e}^{-t\Delta_{1}}$ and its smooth convolution kernel (i.e. ${\rm e}^{-t\Delta_{1}}f=f\ast p_{t}$) with respect to the measure $\rho$, respectively, where $\ast$ is the convolution between two functions $f$ and $g$ (when it exists), i.e.
$$f\ast g(x) =\int_{\G}f(xy^{-1})g(y)d\rho(y).$$
Recall the following relation
$$P_{t}(x, y)=p_{t}(y^{-1}x)\delta(y) \;\;\forall x, y \in \G.$$

It is also known that the generated semigroup ${\rm e}^{-t\Delta_{\chi}}$ on $L^{2}(\mu_{\chi})$ admits an integral kernel $P_{t}^{\chi}\in \mathcal{D}^{\prime}(\G\times \G)$ with respect to the measure $\mu_{\chi}$
$${\rm e}^{-t\Delta_{\chi}}f(x)=\int_{\G}P_{t}^{\chi}(x,y)f(y)d\mu_{\chi}(y),$$
and admits a convolution kernel $p_{t}^{\chi}\in \mathcal{D}^{\prime}(\G)$
$${\rm e}^{-t\Delta_{\chi}}f(x)=f\ast p_{t}^{\chi}(x)=\int_{\G}f(xy^{-1})p_{t}^{\chi}(y)d\rho(y).$$
For $P_{t}^{\chi}$ and $p_{t}^{\chi}$ we have
$$P_{t}^{\chi}(x,y)=p_{t}^{\chi}(y^{-1}x)\chi^{-1}(y)\delta(y).$$
Denoting $b_{X}:=\frac{1}{2}\left(\sum_{i=1}^{n}c_{i}^{2}\right)^{1/2}$ with $c_{i}=(X_{i}\chi)(e)$, we also have
\begin{equation}\label{pt_chi_pt}
p_{t}^{\chi}(x)={\rm e}^{-tb_{X}^{2}}p_{t}(x)\chi^{-1/2}(x),
\end{equation}
so that $P_{t}^{\chi}$ and $p_{t}^{\chi}$ are smooth on $\G\times \G$ and $\G$, respectively.

According to \cite{BPTV18}, we now recall some useful properties of $L^{p}_{\alpha}(\mu_{\chi})$. For every $1<p<\infty$, $\alpha\geq 0$ and $c>0$, we have
\begin{equation}\label{Sob_norm_Bes1}
\|f\|_{L^{p}_{\alpha}(\mu_{\chi})}\approx\|(\Delta_{\chi}+cI)^{\alpha/2}f\|_{L^{p}(\mu_{\chi})}
\end{equation}
and
$$\|(\Delta_{\chi}+cI)^{\alpha_{2}/2}f\|_{L^{p}(\mu_{\chi})}\leq
\|(\Delta_{\chi}+cI)^{\alpha_{1}/2}f\|_{L^{p}(\mu_{\chi})}$$
 when $\alpha_{1}>\alpha_{2}$, i.e. $L^{p}_{\alpha_{1}}(\mu_{\chi})\hookrightarrow L^{p}_{\alpha_{2}}(\mu_{\chi})$.

Denote by $\mathfrak{I}$ the set $\{1,\ldots,n\}$. Let $\mathfrak{I}^{m}$ be the set of multi-indices $J=(j_{1},\ldots,j_{m})$ such that $j_{i}\in \mathfrak{I}$ for every $m, {i}\in \mathbb{N}$, and let $X_{J}$ be the left differential operator $X_{J}=X_{j_{1}}\ldots X_{j_{m}}$ for $J\in \mathfrak{I}^{m}$.

\begin{prop}\cite[Propositions 3.3 and 3.4]{BPTV18} \label{prop3.3-3.4}
\begin{itemize}
\item Let $k\in \mathbb{N}$ and $1<p<\infty$. Then we have
$$\|f\|_{L^{p}_{k}(\mu_{\chi})}\approx \sum_{J\in \mathfrak{I}^{m}, m\leq k}\|X_{J}f\|_{L^{p}(\mu_{\chi})}.$$
\item For every $\alpha\geq 0$ and $1<p<\infty$ we have
$$f\in L^{p}_{\alpha+1}(\mu_{\chi})\Leftrightarrow f\in L^{p}_{\alpha}(\mu_{\chi})\;\;\text{and}\;\;X_{i}f\in L^{p}_{\alpha}(\mu_{\chi})$$
for every $i\in \mathfrak{I}$. In particular
$$\|f\|_{L^{p}_{\alpha+1}(\mu_{\chi})}\approx \|f\|_{L^{p}_{\alpha}(\mu_{\chi})}+\sum_{i=1}^{n}\|X_{i}f\|_{L^{p}_{\alpha}(\mu_{\chi})}.$$
\end{itemize}
\end{prop}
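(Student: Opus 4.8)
The plan is to reduce the whole statement to one analytic input, namely the $L^{p}(\mu_{\chi})$-boundedness of the first-order Riesz transforms
$$R_{i}:=X_{i}(\Delta_{\chi}+cI)^{-1/2},\qquad i\in\mathfrak{I},$$
for all $1<p<\infty$ and all $c>0$, and then to bootstrap. The first bullet will follow from the second by induction on $k$: the case $k=0$ is trivial, and unrolling the equivalence $\|f\|_{L^{p}_{m+1}(\mu_{\chi})}\approx\|f\|_{L^{p}_{m}(\mu_{\chi})}+\sum_{i}\|X_{i}f\|_{L^{p}_{m}(\mu_{\chi})}$ exactly $k$ times collects the terms $\|X_{J}f\|_{L^{p}(\mu_{\chi})}$ with $J\in\mathfrak{I}^{m}$, $m\le k$. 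So the heart of the matter is the first-order characterization (the case $\alpha=0$ of the second bullet) together with the commutation needed to raise $\alpha$.

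I expect the boundedness of the $R_{i}$ to be the main obstacle, and I would establish it through the heat semigroup. By \eqref{pt_chi_pt} the convolution kernel of $e^{-t\Delta_{\chi}}$ is $p_{t}^{\chi}(x)=e^{-tb_{X}^{2}}p_{t}(x)\chi^{-1/2}(x)$, so the Gaussian-type bounds for $p_{t}$ and for its gradient $X_{i}p_{t}$ (classical for the sub-Laplacian $\Delta$ on $(\G,\rho)$) transfer to $p_{t}^{\chi}$, the factor $e^{-tb_{X}^{2}}$ supplying the decay as $t\to\infty$ that replaces a spectral gap. Writing $R_{i}=\Gamma(1/2)^{-1}\int_{0}^{\infty}t^{-1/2}e^{-ct}X_{i}e^{-t\Delta_{\chi}}\,dt$ exhibits it as a singular integral whose kernel obeys Calder\'{o}n--Zygmund estimates on $(\G,d_{C},\mu_{\chi})$, which is locally doubling by \eqref{double_charac}; the local Calder\'{o}n--Zygmund theory then yields boundedness on $L^{p}(\mu_{\chi})$ for $1<p<\infty$. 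The delicacy here is exactly the non-unimodular case: one must control $\chi^{-1/2}$ and the drift terms $c_{j}X_{j}$ at large scales, and it is the shift by $cI>0$ (equivalently the factor $e^{-tb_{X}^{2}}$) that makes the large-scale part of the kernel integrable.

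Granting this, the case $\alpha=0$ of the second bullet is short. With \eqref{Sob_norm_Bes1} fixed at some $c>0$, one inequality is
$$\|X_{i}f\|_{L^{p}(\mu_{\chi})}=\|R_{i}(\Delta_{\chi}+cI)^{1/2}f\|_{L^{p}(\mu_{\chi})}\lesssim\|(\Delta_{\chi}+cI)^{1/2}f\|_{L^{p}(\mu_{\chi})}\approx\|f\|_{L^{p}_{1}(\mu_{\chi})}.$$
For the reverse I would use the identity coming from \eqref{Lap_chi}, $(\Delta_{\chi}+cI)f=cf-\sum_{j}X_{j}^{2}f-\sum_{j}c_{j}X_{j}f$; applying $(\Delta_{\chi}+cI)^{-1/2}$ gives
$$(\Delta_{\chi}+cI)^{1/2}f=c\,(\Delta_{\chi}+cI)^{-1/2}f-\sum_{j}\bigl[(\Delta_{\chi}+cI)^{-1/2}X_{j}\bigr]X_{j}f-\sum_{j}c_{j}(\Delta_{\chi}+cI)^{-1/2}X_{j}f.$$
The operators $(\Delta_{\chi}+cI)^{-1/2}X_{j}$ are bounded on $L^{p}(\mu_{\chi})$: their $L^{2}(\mu_{\chi})$-adjoint is $-R_{j}-c_{j}(\Delta_{\chi}+cI)^{-1/2}$, since $X_{j}^{*}=-X_{j}-c_{j}$ (from right-invariance of $\rho$ together with $X_{j}\chi=c_{j}\chi$), so duality and the boundedness of $R_{j}$ and of the negative power $(\Delta_{\chi}+cI)^{-1/2}$ close the estimate. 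This gives $\|f\|_{L^{p}_{1}(\mu_{\chi})}\lesssim\|f\|_{L^{p}(\mu_{\chi})}+\sum_{i}\|X_{i}f\|_{L^{p}(\mu_{\chi})}$ and hence the equivalence, as well as the stated membership criterion.

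Finally, to push the $\alpha=0$ case to arbitrary $\alpha\ge0$ I would commute $X_{i}$ past $(\Delta_{\chi}+cI)^{\alpha/2}$. Although $X_{i}$ and $\Delta_{\chi}$ do not commute, the brackets $[X_{i},X_{j}]$ are again first-order, so the principal parts cancel and the commutator $[X_{i},(\Delta_{\chi}+cI)^{\alpha/2}]$ is of order $\alpha$ rather than $\alpha+1$; made precise through the same subordination representation and the spectral calculus for $\Delta_{\chi}$, this yields $\|(\Delta_{\chi}+cI)^{\alpha/2}X_{i}f\|_{L^{p}(\mu_{\chi})}\approx\|X_{i}(\Delta_{\chi}+cI)^{\alpha/2}f\|_{L^{p}(\mu_{\chi})}$ modulo terms already dominated by $\|f\|_{L^{p}_{\alpha}(\mu_{\chi})}$. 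Combining this with the first-order equivalence applied to $(\Delta_{\chi}+cI)^{\alpha/2}f$ gives the second bullet for every $\alpha$, and the induction described above then delivers the norm equivalence of the first bullet.
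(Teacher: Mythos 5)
First, a point of comparison: the paper does not prove this proposition at all — it is quoted verbatim from \cite{BPTV18} (Propositions 3.3 and 3.4), so your attempt can only be measured against the proof in that reference. Your overall architecture is the natural one and matches the spirit of that proof at first order: the $L^{p}(\mu_{\chi})$-boundedness of the Riesz transforms $X_{i}(\Delta_{\chi}+cI)^{-1/2}$ as the analytic core, the identity from \eqref{Lap_chi} giving $(\Delta_{\chi}+cI)f=cf-\sum_{j}X_{j}^{2}f-\sum_{j}c_{j}X_{j}f$ for the reverse bound, and the adjoint relation $X_{j}^{*}=-X_{j}-c_{j}$ on $L^{2}(\mu_{\chi})$, which you compute correctly (it is exactly what makes $\Delta_{\chi}=\sum_{j}X_{j}^{*}X_{j}$ symmetric on $L^{2}(\mu_{\chi})$). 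One small imprecision: the decay at $t\to\infty$ in the subordination integral comes from the factor $e^{-ct}$, $c>0$, not from $e^{-tb_{X}^{2}}$, since $b_{X}=0$ whenever $\chi\equiv 1$; these are not equivalent as you suggest.

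The genuine gap is the commutation step, and it is not a technicality: your entire treatment of $\alpha\geq 1$ — hence also of the first bullet for $k\geq 2$, since your induction invokes the second bullet at the integer levels $\alpha=1,\ldots,k-1$ — rests on the claim that $[X_{i},(\Delta_{\chi}+cI)^{\alpha/2}]$ has order $\alpha$ \emph{because the brackets $[X_{i},X_{j}]$ are first order and the principal parts cancel}. On a general connected Lie group there is no pseudodifferential calculus backing this heuristic. H\"{o}rmander's condition only says that \emph{iterated} brackets of $X_{1},\ldots,X_{n}$ span the Lie algebra; $[X_{i},X_{j}]$ itself is in general not in the linear span of $X_{1},\ldots,X_{n}$, so $[X_{i},\Delta_{\chi}]=-\sum_{j}\bigl(X_{j}[X_{i},X_{j}]+[X_{i},X_{j}]X_{j}+c_{j}[X_{i},X_{j}]\bigr)$ is a genuinely second-order operator involving directions transverse to the horizontal distribution, and the spectral calculus of $\Delta_{\chi}$ — which only sees functions of $\Delta_{\chi}$, never the individual fields $X_{i}$ — cannot convert this into a gain of one order, let alone for fractional powers. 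Closing the gap requires different tools. For integer orders one uses boundedness of the \emph{higher-order} Riesz transforms $X_{J}(\Delta_{\chi}+cI)^{-m/2}$, $J\in\mathfrak{I}^{m}$, which is within reach of your Calder\'{o}n--Zygmund argument precisely because Lemma \ref{lem2.3} (iii) bounds all derivatives $X_{J}p_{t}^{\chi}$ of the heat kernel, not only the first-order ones. For fractional $\alpha$ the argument of \cite{BPTV18} does not commute operators at all: it interpolates between the integer cases, which requires identifying the complex interpolation spaces of the scale $\{L^{p}_{k}(\mu_{\chi})\}_{k}$ with $L^{p}_{\alpha}(\mu_{\chi})$, and this in turn rests on the boundedness of the imaginary powers $(\Delta_{\chi}+cI)^{iu}$ on $L^{p}(\mu_{\chi})$ supplied by the spectral multiplier theorem of \cite{HMM05}. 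So your outline is repairable, but the repair replaces the commutator heuristic by higher-order kernel estimates plus an interpolation/multiplier argument, and that is where the real work of the cited propositions lies.
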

\begin{prop}\cite[Proposition 5.7 (ii)]{HMM05} \label{prop5.7} Let $\G$ be a noncompact connected Lie group. Let $\|X\|=\left(\sum_{i=1}^{n}c_{i}^{2}\right)^{1/2}$ with $c_{i}=(X_{i}\chi)(e)$, $i\in\mathfrak{I}$. Then for every $r\in \mathbb{R}^{+}$ we have
$$
\sup_{x\in B_{r}}\chi(x)={\rm e}^{\|X\|r}.
$$
\end{prop}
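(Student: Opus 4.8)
The plan is to pass to the additive character $\phi := \log\chi \colon \G \to \mathbb{R}$, which satisfies $\phi(xy) = \phi(x)+\phi(y)$ and $\phi(e)=0$, and to exploit the fact that its horizontal derivatives are constant. Indeed, since each $X_i$ is left-invariant, for any $g\in\G$ one has $(X_i\phi)(g) = \frac{d}{dt}\big|_{t=0}\phi(g\exp(tX_i)) = \frac{d}{dt}\big|_{t=0}\phi(\exp(tX_i)) = (X_i\phi)(e)$, using additivity of $\phi$; and since $\chi = e^{\phi}$ gives $c_i = (X_i\chi)(e) = (X_i\phi)(e)$, we conclude $X_i\phi \equiv c_i$ on all of $\G$ (equivalently $X_i\chi = c_i\chi$). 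This constancy of the horizontal gradient is the whole point: it turns the computation of $\phi$ along any horizontal path into an elementary estimate.

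For the upper bound, fix $x\in B_r$ and take a horizontal curve $\gamma\colon[0,T]\to\G$ from $e$ to $x$, parametrised by arc length, with $T<r$ (such a curve exists by definition of the Carnot-Carath\'{e}odory distance, since $|x|<r$). Writing $\dot\gamma(t) = \sum_{i=1}^n a_i(t)X_i(\gamma(t))$ with $\sum_i a_i(t)^2 = 1$, the chain rule together with the previous step gives $\frac{d}{dt}\phi(\gamma(t)) = \sum_i a_i(t)c_i$, so by Cauchy-Schwarz $\big|\frac{d}{dt}\phi(\gamma(t))\big|\le \|X\|$. Integrating yields $\phi(x) \le \|X\|\,T < \|X\|\,r$; hence $\sup_{x\in B_r}\phi(x) \le \|X\|\,r$, and in particular the degenerate case $\|X\|=0$ already forces $\phi\equiv 0$, i.e. $\chi\equiv 1$, and the identity $1=e^{0}$ holds trivially.

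For the matching lower bound (assuming now $\|X\|>0$), I would test against the one-parameter subgroup $\gamma(t) = \exp\big(t\sum_{i} \tfrac{c_i}{\|X\|} X_i\big)$ generated by the horizontal unit vector $V := \sum_i (c_i/\|X\|)X_i$. This curve has unit speed, so $|\gamma(t)| \le t$, whence $\gamma(t)\in B_r$ for every $t<r$; and along it $\frac{d}{dt}\phi(\gamma(t)) = \sum_i (c_i/\|X\|)c_i = \|X\|$, so $\phi(\gamma(t)) = \|X\|\,t$. Therefore $\sup_{x\in B_r}\phi(x) \ge \sup_{t<r}\|X\|\,t = \|X\|\,r$, which combined with the upper bound gives $\sup_{x\in B_r}\phi(x) = \|X\|\,r$, and exponentiating yields $\sup_{x\in B_r}\chi(x) = e^{\|X\|r}$. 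The only genuinely delicate points are the two geometric facts about the Carnot-Carath\'{e}odory distance that I invoke---that every point of $B_r$ is joined to $e$ by a horizontal curve of length $<r$, and that the unit-speed subgroup curve satisfies $|\gamma(t)|\le t$---but both are immediate from the definition of $d_C$ as the infimal length of admissible horizontal paths; the analytic content is entirely contained in the constancy $X_i\phi\equiv c_i$ and the Cauchy-Schwarz step.
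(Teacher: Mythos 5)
The paper does not actually contain a proof of this proposition: it is imported verbatim as a citation of \cite[Proposition 5.7 (ii)]{HMM05}, so there is no in-paper argument to compare against. Judged on its own, your proof is correct and self-contained, and it reconstructs what is essentially the standard argument behind the cited result. The key identity $X_i\phi\equiv c_i$ (equivalently $X_i\chi=c_i\chi$), derived from left-invariance plus the homomorphism property of $\phi=\log\chi$, is exactly the right analytic input; the upper bound then follows from Cauchy--Schwarz along an arc-length horizontal curve of length $<r$, and the lower bound from the one-parameter subgroup in the direction $V=\sum_i(c_i/\|X\|)X_i$, whose CC-length up to time $t$ is at most $t$ and along which $\phi$ grows linearly at rate $\|X\|$. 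Both geometric facts you flag as delicate are indeed immediate from the definition of $d_C$, and your treatment is consistent with the paper's convention that $B_r$ is the \emph{open} ball (the supremum is approached, not attained, which your limit $t\nearrow r$ handles). Two small points worth making explicit: (a) admissible curves in the definition of $d_C$ are only absolutely continuous, so the chain rule $\frac{d}{dt}\phi(\gamma(t))=\sum_i a_i(t)c_i$ holds a.e.\ and the integration step should be phrased accordingly (this is routine since $\phi$ is smooth --- the paper notes continuous positive characters are automatically smooth); (b) in the degenerate case $\|X\|=0$, the conclusion $\sup_{B_r}\chi=1$ follows even more cheaply than you state, since the upper bound gives $\sup_{B_r}\chi\leq 1$ while $\chi(e)=1$ gives the reverse; your stronger claim $\phi\equiv 0$ is also true but needs one extra line (e.g.\ $\phi(x^{-1})=-\phi(x)\leq 0$ forces $\phi\geq 0$). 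Neither point is a gap.
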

We also recall that for every character $\chi$ and $R>0$, there exists a constant $c=c(\chi,R)$ such that
\begin{equation}\label{double_charac}
c^{-1}\chi(x)\leq \chi(y)\leq c\chi(x)\;\;\forall x,y\in \G \;\;\text{s.t.}\;\;d_{C}(x,y)\leq R,
\end{equation}
which means that the metric measure space $(\G,d_{C},\mu_{\chi})$ is locally doubling.
\begin{lem}\cite[Lemma 2.3]{BPTV18} \label{lem2.3} Let $\G$ be a noncompact connected Lie group. Then we have
\begin{enumerate}[label=(\roman*)]
\item ${\rm e}^{-t\Delta_{\chi}}$ is a diffusion semigroup on $(\G,\mu_{\chi})$;
\item Let $\chi$ be a positive character of $\G$. Then we have  $\int_{B_{r}}\chi d\rho\leq {\rm e}^{(\|X\|+D)r}$ for every $r>1$, where $\|X\|=\left(\sum_{i=1}^{n}c_{i}^{2}\right)^{1/2}$ with $c_{i}=(X_{i}\chi)(e)$, $i\in\mathfrak{I}$.
\item Furthermore, there exist two positive constants $\omega$ and $b$ such that, for every $m\in \mathbb{N}$ and $J\in \mathfrak{I}^{m}$, we have $|X_{J}p_{t}^{\chi}(x)|\lesssim \chi^{-1/2}(x)t^{-(d+m)/2}{\rm e}^{\omega t}{\rm e}^{-b|x|^{2}/t}$, for all $t>0$ and $x\in \G$.
\end{enumerate}
\end{lem}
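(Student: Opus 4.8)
The plan is to reduce every assertion about the drifted semigroup $e^{-t\Delta_{\chi}}$ on $L^{2}(\mu_{\chi})$ to the corresponding classical fact about the sum-of-squares heat semigroup $e^{-t\Delta}$ on $L^{2}(\rho)$, via the ground-state transform. The starting point is the identity $X_{j}\chi=c_{j}\chi$ with $c_{j}=(X_{j}\chi)(e)$, valid because $\chi$ is a character and $X_{j}$ is left-invariant; consequently $X_{j}\chi^{\pm1/2}=\pm\tfrac12 c_{j}\chi^{\pm1/2}$. First I would check that the multiplication operator $V\colon L^{2}(\mu_{\chi})\to L^{2}(\rho)$, $Vf=\chi^{1/2}f$, is unitary, and that a direct computation using \eqref{Lap_chi} together with the eigenfunction relations above gives $V\Delta_{\chi}V^{-1}=\Delta+b_{X}^{2}I$ on $L^{2}(\rho)$, where $b_{X}^{2}=\tfrac14\sum_{j}c_{j}^{2}$. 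Since left-invariant fields generate right translations and are therefore skew-adjoint for the right Haar measure $\rho$, the operator $\Delta=-\sum_{j}X_{j}^{2}$ is non-negative and self-adjoint on $L^{2}(\rho)$, so $e^{-t\Delta}$ is the standard symmetric Markov diffusion semigroup; the conjugation then yields $e^{-t\Delta_{\chi}}=e^{-tb_{X}^{2}}V^{-1}e^{-t\Delta}V$, which reproduces \eqref{pt_chi_pt}.

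For part (i) I would verify the defining properties of a diffusion semigroup one at a time. Symmetry and $L^{2}(\mu_{\chi})$-contractivity follow from the self-adjointness and non-negativity of $\Delta_{\chi}$ established in \cite{HMM05}. Positivity preservation follows from the positivity of the convolution kernel $p_{t}^{\chi}=e^{-tb_{X}^{2}}\chi^{-1/2}p_{t}>0$. Conservativeness $e^{-t\Delta_{\chi}}1=1$ is immediate from $\Delta_{\chi}1=0$, which holds because the drift term $c_{j}X_{j}$ was introduced precisely so that the constants are harmonic. Positivity together with $e^{-t\Delta_{\chi}}1=1$ gives the $L^{\infty}$-contraction, duality with self-adjointness gives the $L^{1}$-contraction, and interpolation covers $1<p<\infty$; these are exactly Stein's axioms for a diffusion semigroup.

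Part (ii) is elementary: I would estimate $\int_{B_{r}}\chi\,d\rho\le\bigl(\sup_{x\in B_{r}}\chi(x)\bigr)\,\rho(B_{r})$, then insert $\sup_{x\in B_{r}}\chi(x)=e^{\|X\|r}$ from Proposition \ref{prop5.7} together with the global volume bound $V(r)=\rho(B_{r})\lesssim e^{Dr}$ for $r>1$ from \eqref{V_great1}, obtaining $\int_{B_{r}}\chi\,d\rho\lesssim e^{(\|X\|+D)r}$ as claimed.

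For part (iii) I would start from $p_{t}^{\chi}=e^{-tb_{X}^{2}}\chi^{-1/2}p_{t}$ and expand $X_{J}p_{t}^{\chi}$ by the Leibniz rule. Because each $X_{j}$ hitting $\chi^{-1/2}$ only produces the scalar $-\tfrac12 c_{j}$ times $\chi^{-1/2}$, all such terms collapse, leaving $X_{J}p_{t}^{\chi}(x)=e^{-tb_{X}^{2}}\chi^{-1/2}(x)\sum_{|J'|\le m}a_{J'}\,X_{J'}p_{t}(x)$ for suitable constants $a_{J'}$. It then remains to insert the classical Gaussian derivative estimates $|X_{J'}p_{t}(x)|\lesssim t^{-(d+|J'|)/2}e^{\omega t}e^{-b|x|^{2}/t}$ for the sub-Laplacian heat kernel and to dominate the finite sum by its top-order term. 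The hard part of the whole lemma is exactly this input for (iii): establishing the pointwise Gaussian bounds on the derivatives of $p_{t}$ uniformly in $t$, which rests on the Varopoulos--Saloff-Coste--Coulhon heat-kernel theory on Lie groups, with the small-time behaviour governed by the local dimension $d$ and the large-time growth absorbed into $e^{\omega t}$; everything else reduces to the transform and soft functional-analytic arguments.
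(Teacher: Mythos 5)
First, note that this paper never proves Lemma \ref{lem2.3}: it is quoted as a known result from \cite[Lemma 2.3]{BPTV18}, which in turn rests on \cite{HMM05} and on classical heat-kernel estimates, so your proposal can only be compared with the proofs in those references. Measured against them, your architecture is the standard one and essentially matches theirs: the eigenfunction relation $X_{j}\chi=c_{j}\chi$ and the unitary ground-state transform $Vf=\chi^{1/2}f$ with $V\Delta_{\chi}V^{-1}=\Delta+b_{X}^{2}I$ (your computation is correct, and it is exactly where \eqref{pt_chi_pt} comes from); part (ii) via Proposition \ref{prop5.7} and \eqref{V_great1}, which is precisely the one-line argument of \cite{BPTV18}, up to the harmless point that you obtain $Ce^{(\|X\|+D)r}$ and must absorb the constant into the exponent for $r>1$; and part (iii) via the Leibniz expansion, which indeed collapses to $\chi^{-1/2}\prod_{i}\bigl(X_{j_{i}}-\tfrac{1}{2}c_{j_{i}}\bigr)p_{t}$ because $\chi^{-1/2}$ is a joint eigenfunction of the $X_{j}$, followed by the all-time Gaussian bounds for $X_{J'}p_{t}$, with the lower-order powers $t^{(m-|J'|)/2}$ absorbed into $e^{\omega t}$ after enlarging $\omega$. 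You also correctly identify those Gaussian derivative estimates (Varopoulos, \cite{VCS92}) as the only deep input in (iii).

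The step I cannot accept as written is the conservativeness claim in part (i): \enquote{$e^{-t\Delta_{\chi}}1=1$ is immediate from $\Delta_{\chi}1=0$}. That inference is invalid in general: $\mu_{\chi}(\G)=\infty$, the constant function $1$ is not in $L^{2}(\mu_{\chi})$, and on weighted manifolds harmonicity of constants does not imply stochastic completeness (heat can escape to infinity; there are classical counterexamples among complete Riemannian manifolds of fast volume growth). Concretely, by your own kernel formula, $e^{-t\Delta_{\chi}}1=\int_{\G}p_{t}^{\chi}\,d\rho=e^{-tb_{X}^{2}}\int_{\G}\chi^{-1/2}p_{t}\,d\rho$, so conservativeness is equivalent to the nontrivial identity $\int_{\G}\chi^{-1/2}p_{t}\,d\rho=e^{tb_{X}^{2}}$. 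Moreover, your $L^{\infty}$-contraction, and hence via duality and interpolation the whole $L^{p}$ scale in Stein's axioms, rests on exactly the bound $\int_{\G}p_{t}^{\chi}\,d\rho\leq 1$; so this is the real analytic content of (i) beyond self-adjointness, not a cosmetic point. Two honest ways to close the gap: (a) observe that integration by parts against $\chi\,d\rho$ cancels the drift terms, so that $\Delta_{\chi}$ is the self-adjoint operator associated with the Dirichlet form $\mathcal{E}(f)=\sum_{j}\int_{\G}|X_{j}f|^{2}\,d\mu_{\chi}$ on $L^{2}(\mu_{\chi})$; Dirichlet form theory then yields positivity preservation and the sub-Markov property at once, and conservativeness follows from a volume-growth criterion for stochastic completeness (Grigor'yan/Sturm), applicable because part (ii) gives $\mu_{\chi}(B_{r})\leq e^{Cr}$; or (b) prove $\int_{\G}\chi^{-1/2}p_{t}\,d\rho=e^{tb_{X}^{2}}$ directly by differentiating in $t$ and integrating by parts, which needs the Gaussian bounds of part (iii) to control the boundary terms, so it must come after (iii), not before it. The reference proof sidesteps all of this by citing \cite{HMM05}, where the Markov property of $e^{-t\Delta_{\chi}}$ is established.
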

By virtue of the next proposition, proofs of Theorems \ref{main1_thm} and \ref{main3_thm} can be reduced to proofs of Theorems \ref{main2_thm} and \ref{main4_thm}, respectively:
\begin{prop}\cite[Proposition 3.5]{BPTV18} \label{prop3.5}
Let $p\in(1,\infty)$ and $\alpha\geq 0$. Then we have
$$\|f\|_{L^{p}_{\alpha}(\mu_{\chi})}\approx \|\chi^{1/p}f\|_{L^{p}_{\alpha}(\rho)}.$$
\end{prop}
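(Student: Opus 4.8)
The plan is to realise the multiplication map $M_{\chi^{1/p}}\colon f\mapsto\chi^{1/p}f$ as an isomorphism between the two Sobolev spaces, so that the asserted equivalence becomes the statement that $M_{\chi^{1/p}}$ and its inverse are bounded from $L^p_\alpha(\mu_\chi)$ onto $L^p_\alpha(\rho)$. The zeroth-order part is immediate: since $d\mu_\chi=\chi\,d\rho$, one has $\|\chi^{1/p}f\|_{L^p(\rho)}^p=\int_\G\chi|f|^p\,d\rho=\|f\|_{L^p(\mu_\chi)}^p$, so $M_{\chi^{1/p}}$ is an $L^p$-isometry. The whole content is thus the interaction of $M_{\chi^{1/p}}$ with the derivatives, and the single algebraic fact behind everything is that $\chi$ is a character: $X_j\chi=c_j\chi$ with $c_j=(X_j\chi)(e)$, hence $X_j\chi^{s}=s\,c_j\chi^{s}$ for every power $s$.

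First I would treat integer order $\alpha=k$ by means of Proposition \ref{prop3.3-3.4}, which gives $\|g\|_{L^p_k(\mu_\chi)}\approx\sum_{|J|\le k}\|X_Jg\|_{L^p(\mu_\chi)}$ and, in the case $\chi=1$ (so $c_j=0$, $\Delta_1=\Delta$ and $\mu_1=\rho$), the analogous characterization of $\|\cdot\|_{L^p_k(\rho)}$. From $X_j\chi^{1/p}=\tfrac1p c_j\chi^{1/p}$ one obtains the conjugation identity $M_{\chi^{-1/p}}X_jM_{\chi^{1/p}}=X_j+\tfrac1p c_j$, and since conjugation is multiplicative, $M_{\chi^{-1/p}}X_JM_{\chi^{1/p}}=\prod_i\bigl(X_{j_i}+\tfrac1p c_{j_i}\bigr)$ for $J=(j_1,\dots,j_m)$; expanding this product writes it as $X_J$ plus a fixed, $f$-independent constant-coefficient combination of the $X_K$ with $|K|<m$. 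Consequently
\[
\|X_J(\chi^{1/p}f)\|_{L^p(\rho)}=\Bigl\|\textstyle\prod_i\bigl(X_{j_i}+\tfrac1p c_{j_i}\bigr)f\Bigr\|_{L^p(\mu_\chi)}\lesssim\sum_{|K|\le m}\|X_Kf\|_{L^p(\mu_\chi)},
\]
and summing over $|J|\le k$ and applying Proposition \ref{prop3.3-3.4} on both sides gives $\|\chi^{1/p}f\|_{L^p_k(\rho)}\lesssim\|f\|_{L^p_k(\mu_\chi)}$. The reverse inequality follows symmetrically from $M_{\chi^{1/p}}X_jM_{\chi^{-1/p}}=X_j-\tfrac1p c_j$, transferring $\|X_Jf\|_{L^p(\mu_\chi)}=\|\chi^{1/p}X_Jf\|_{L^p(\rho)}$ into a combination of the $\|X_K(\chi^{1/p}f)\|_{L^p(\rho)}$. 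This settles $\alpha=k\in\mathbb{N}$.

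For general $\alpha\ge0$ I would pass from integers to the fractional scale by complex interpolation. By \eqref{Sob_norm_Bes1} the norm $\|\cdot\|_{L^p_\alpha}$ is comparable to the Bessel-potential norm $\|(\Delta_\chi+cI)^{\alpha/2}\cdot\|_{L^p(\mu_\chi)}$; because the Gaussian heat-kernel bounds of Lemma \ref{lem2.3} endow $\Delta_\chi$ (and $\Delta$) with a bounded holomorphic functional calculus on $L^p$, the spaces $L^p_\alpha(\mu_\chi)$ form a complex interpolation scale, $[L^p_{k_0}(\mu_\chi),L^p_{k_1}(\mu_\chi)]_\theta=L^p_\alpha(\mu_\chi)$ with $\alpha=(1-\theta)k_0+\theta k_1$, and likewise for $\rho$. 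Since the integer-order step shows $M_{\chi^{\pm1/p}}$ bounded between the corresponding endpoint spaces, interpolation propagates the boundedness to every $\alpha$, yielding the asserted equivalence in full. The main obstacle is precisely this final transition: one is tempted to conjugate the functional calculus directly, but the relation \eqref{pt_chi_pt}, $p_t^\chi=e^{-tb_X^2}p_t\chi^{-1/2}$, shows that the conjugator intertwining $\Delta_\chi$ with a shift of $\Delta$ is $\chi^{1/2}$, not the $\chi^{1/p}$ forced by the $L^p$-isometry; for $p\ne2$ a naive conjugation produces the spurious weight $\chi^{1-p/2}$. This is exactly why the proof must route through the vector-field characterization and interpolation rather than through a single operator identity, and verifying the interpolation identity for the scale is the crux.
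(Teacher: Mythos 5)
The paper itself contains no proof of this proposition: it is imported verbatim from \cite[Proposition 3.5]{BPTV18}, so your argument can only be measured against that source and on its own merits. On its merits, your proof is correct. The zeroth-order isometry and the integer-order step check out completely: since $\chi$ is a character and $X_j$ is left-invariant, $X_j\chi=c_j\chi$, hence $M_{\chi^{-1/p}}X_JM_{\chi^{1/p}}=\prod_i\bigl(X_{j_i}+\tfrac1p c_{j_i}\bigr)$, and combining this with the vector-field characterization of Proposition \ref{prop3.3-3.4} (applied once for $\chi$ and once for the trivial character, since $\rho=\mu_1$ and $\Delta_1=\Delta$) gives the two-sided bound for integer $\alpha$, with the reverse direction handled symmetrically by $M_{\chi^{1/p}}X_jM_{\chi^{-1/p}}=X_j-\tfrac1p c_j$. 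Your closing diagnosis of why no single operator identity can work for $p\neq2$ is also accurate: by \eqref{pt_chi_pt} the operator-level conjugator is $\chi^{1/2}$, and pushing it through the $L^p(\mu_\chi)$ norm leaves the residual density $|\chi^{-1/2}|^{p}\chi=\chi^{1-p/2}$, which is trivial only when $p=2$. This integer-case-plus-interpolation route is essentially the argument of \cite{BPTV18} itself, which develops exactly these two tools (the vector-field characterization and an interpolation theorem for the scale) before stating the proposition.

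The one load-bearing ingredient you assert rather than prove is the complex interpolation identity $[L^p_{k_0}(\mu_\chi),L^p_{k_1}(\mu_\chi)]_\theta=L^p_{(1-\theta)k_0+\theta k_1}(\mu_\chi)$ (and its analogue over $\rho$). The identity is true and citable from \cite{BPTV18}, so relying on it is legitimate, but your justification for it is slightly misattributed. Gaussian heat-kernel bounds (part (iii) of Lemma \ref{lem2.3}) would feed a Duong--McIntosh-type argument, which requires a globally doubling space, whereas $(\G,d_{C},\mu_{\chi})$ is only locally doubling. The correct source is part (i) of Lemma \ref{lem2.3}: $e^{-t\Delta_\chi}$ is a diffusion semigroup on $(\G,\mu_\chi)$, so by the Stein--Cowling theory its generator has bounded imaginary powers (indeed a bounded $H^\infty$ calculus) on $L^p(\mu_\chi)$ for $1<p<\infty$, and the interpolation identity for domains of fractional powers then follows from the classical theory (Seeley, Triebel). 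With that citation corrected, your proof is complete.
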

We note (see also \cite{BPTV18}) that the function
\begin{equation}\label{G_bessel_rep}
G_{\alpha,\chi}^{c}(x)=C(\alpha)\int_{0}^{\infty}t^{\alpha/2-1}{\rm e}^{-ct}p_{t}^{\chi}(x)dt
\end{equation}
is the convolution kernel of the operator $(\Delta_{\chi}+cI)^{-\alpha/2}$, i.e.
\begin{equation}\label{Sob_norm_Bes2}
(\Delta_{\chi}+cI)^{-\alpha/2}f=f\ast G_{\alpha,\chi}^{c},
\end{equation}
where $c>\omega$ and $\omega$ is from Lemma \ref{lem2.3}.
\begin{lem}\cite[Lemma 4.1]{BPTV18}\label{lemma4.1_orig} Let $b$ and $\omega$ be as in Lemma \ref{lem2.3}. Let $c>\omega$ and $c^{\prime}=\frac{1}{2}\sqrt{b(c-\omega)}$. Then we have
\begin{equation}\label{less1_orig}
|G_{\alpha,\chi}^{c}|\leq C
\begin{cases} {}|x|^{\alpha-d} \;\text{\;if}\;\;0<\alpha<d\;\;\text{and}\;\;|x|\leq1,\\
\chi^{-1/2}(x){\rm e}^{-c'|x|}\;\text{\;when}\;\;|x|>1\end{cases}
\end{equation}
for some positive constant $C$.
\end{lem}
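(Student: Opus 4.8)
The final statement is Lemma (BPTV18 Lemma 4.1), which gives pointwise bounds on the Bessel-type kernel $G_{\alpha,\chi}^{c}$ defined via the subordination formula $G_{\alpha,\chi}^{c}(x)=C(\alpha)\int_0^\infty t^{\alpha/2-1}e^{-ct}p_t^\chi(x)\,dt$. I need to produce the two-regime bound: a polynomial/logarithmic singularity $|x|^{\alpha-d}$ (resp. $\log(1/|x|)$ when $\alpha=d$) for $|x|\le 1$, and exponential decay $\chi^{-1/2}(x)e^{-c'|x|}$ for $|x|>1$. Let me think about how to prove this from the heat-kernel estimate in Lemma 2.3(iii).
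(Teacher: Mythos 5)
Your proposal stops where the proof should begin: after restating the lemma and naming the ingredients (the subordination formula \eqref{G_bessel_rep} and the Gaussian heat-kernel bound of Lemma \ref{lem2.3}(iii)), it ends with the announcement that you will ``think about how to prove this,'' and no argument is ever given. Identifying the right ingredients is not a proof; the entire estimation is missing. For the record, the paper itself does not prove this statement either --- it cites \cite[Lemma 4.1]{BPTV18} --- but its appendix (Lemma \ref{est_G_com_lem}) carries out exactly the computation you would need, in the compact case, so the route you point to is indeed the intended one; you simply never execute it.

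Concretely, the missing steps are these. First, insert the $m=0$ case of Lemma \ref{lem2.3}(iii), namely $|p_t^{\chi}(x)|\lesssim \chi^{-1/2}(x)\,t^{-d/2}e^{\omega t}e^{-b|x|^{2}/t}$, into \eqref{G_bessel_rep}, reducing the lemma to bounding
$\chi^{-1/2}(x)\int_{0}^{\infty}t^{(\alpha-d)/2-1}e^{-(c-\omega)t}e^{-b|x|^{2}/t}\,dt$;
this is where the hypothesis $c>\omega$ enters, and your proposal never says so. Second, for $|x|\leq 1$ you must absorb the factor $\chi^{-1/2}(x)$ into the constant (using local boundedness of characters, cf. \eqref{double_charac}, since the claimed bound for $|x|\le 1$ has no $\chi^{-1/2}$ in it), and then change variables $u=b|x|^{2}/t$: dropping $e^{-(c-\omega)t}\leq 1$, the integral becomes $(b|x|^{2})^{(\alpha-d)/2}\int_{0}^{\infty}u^{(d-\alpha)/2-1}e^{-u}\,du$, which is finite precisely because $\alpha<d$ and yields $|x|^{\alpha-d}$; for $\alpha=d$ one instead splits the $t$-integral at $t=|x|^{2}$ and $t=1$, the middle piece producing the $\log(1/|x|)$ while the two ends are bounded. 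Third, and this is the step that produces the specific constant $c'=\tfrac12\sqrt{b(c-\omega)}$ for $|x|>1$: split the exponent as
$e^{-(c-\omega)t-b|x|^{2}/t}=e^{-\frac14[(c-\omega)t+b|x|^{2}/t]}\,e^{-\frac34[(c-\omega)t+b|x|^{2}/t]}$,
bound the first factor via the arithmetic--geometric mean inequality $(c-\omega)t+b|x|^{2}/t\geq 2\sqrt{b(c-\omega)}\,|x|$ by $e^{-c'|x|}$, and use the second factor to keep the remaining $t$-integral bounded uniformly in $|x|>1$ (the piece $e^{-\frac34 b|x|^{2}/t}\leq e^{-\frac34 b/t}$ controls $t\to 0$, and $e^{-\frac34(c-\omega)t}$ controls $t\to\infty$). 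Without these three blocks the statement \eqref{less1_orig} is not established, so as it stands the proposal is a plan, not a proof.
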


We will also use Young's inequalities in the following form:
\begin{lem}\cite[Lemma 4.3]{BPTV18}\label{lemma4.3} Let $1<p\leq q<\infty$ and $r\geq1$ be such that $1/p+1/r=1+1/q$. Then we have
\begin{equation}\label{Young_for}
\|f \ast g\|_{L^{q}(\lambda)}\leq \|f\|_{L^{p}(\lambda)}(\|\check{g}\|^{r/p^{\prime}}_{L^{r}(\lambda)}\|g\|^{r/q}_{L^{r}(\lambda)}),
\end{equation}
where $\check{g}(x)=g(x^{-1})$.
\end{lem}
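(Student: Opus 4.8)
\textbf{Setup via the left Haar measure.}
The plan is to reduce the stated weighted Young inequality on the Lie group to the classical Young convolution inequality, using the fact that the left Haar measure $\lambda$ makes $(\G,\lambda)$ into a unimodular-style convolution algebra on which the standard proof works, despite $\G$ possibly being non-unimodular. First I would recall that the convolution here is $f\ast g(x)=\int_{\G}f(xy^{-1})g(y)\,d\rho(y)$ with respect to the \emph{right} Haar measure $\rho$, and that $d\lambda=\delta\,d\rho$. The key algebraic point to record is how the right-convolution interacts with $\lambda$: substituting $d\rho=\delta^{-1}d\lambda$ and tracking the modular factor under the change of variables $y\mapsto y^{-1}$ and the left/right translations. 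The goal of this bookkeeping step is to rewrite $f\ast g$ in a form to which the three-line (H\"older-interpolation) proof of Young's inequality applies directly in the measure $\lambda$.

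\textbf{The interpolation argument.}
With the convolution expressed against $\lambda$, I would run the standard Young inequality proof. Writing the exponents so that $1/p+1/r=1+1/q$, one introduces the conjugate relations and splits the integrand $|f(xy^{-1})|\,|g(y)|$ as a product of three factors, balancing powers of $|f|$ and $|g|$ to match the exponents $p$, $q$, and $r$; applying H\"older's inequality with three exponents (or equivalently, interpolating between the $L^1$–$L^q$ and $L^p$–$L^p$ bounds) yields the pointwise estimate on $\|f\ast g\|_{L^q(\lambda)}$. The asymmetry between $\check g$ and $g$ in the factor $\|\check g\|_{L^r(\lambda)}^{r/p'}\|g\|_{L^r(\lambda)}^{r/q}$ is exactly what arises from the two different ways the variable $y$ enters the convolution kernel: one copy of $g$ is integrated directly against $y$, while the other appears through $y^{-1}$ after the change of variables, producing the reflected function $\check g(x)=g(x^{-1})$. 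Keeping careful track of which factor carries the reflection is the crux of obtaining the precise form stated, rather than the cruder symmetric bound $\|g\|_{L^r}$.

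\textbf{The main obstacle.}
The hard part will be the modular-function bookkeeping: because $\G$ need not be unimodular, the reflection $y\mapsto y^{-1}$ and the passage between $\rho$ and $\lambda$ each introduce factors of $\delta$, and one must verify that these factors either cancel or are precisely absorbed into the reflected norm $\|\check g\|_{L^r(\lambda)}$. Concretely, I expect that the identity $\int_{\G}h(y^{-1})\,d\lambda(y)=\int_{\G}h(y)\,d\rho(y)$ (and its variants) is what makes the two distinct $L^r$-factors appear with a reflection on exactly one of them; getting the exponents $r/p'$ and $r/q$ to match rather than collapsing requires care in how the H\"older split is arranged relative to these modular identities. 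Once the convolution is correctly normalized against $\lambda$, the remaining interpolation is routine, so I would not grind through it in detail. Since this lemma is quoted from \cite[Lemma 4.3]{BPTV18}, I would alternatively simply invoke that reference, but the self-contained argument above via modular-function reduction plus classical Young's inequality is the approach I would take to reprove it.
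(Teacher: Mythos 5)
The paper offers no proof of this lemma at all: it is imported verbatim from \cite[Lemma 4.3]{BPTV18}, so the fallback you mention at the end --- simply invoking that reference --- is exactly what the paper does, and your self-contained sketch is the standard argument behind the cited result. The sketch is correct in outline and does close up; since you deferred the computation, here is the confirmation that your plan works. From $1/p+1/r=1+1/q$ you get $1/p-1/q=1/r'$ and $1/r-1/q=1/p'$, so you may write
\begin{equation*}
|f(xy^{-1})\,g(y)|=\bigl(|f(xy^{-1})|^{p}|g(y)|^{r}\bigr)^{1/q}\,|f(xy^{-1})|^{p/r'}\,|g(y)|^{r/p'},
\end{equation*}
and apply H\"older with the three exponents $(q,r',p')$ to the integral in $d\rho(y)$. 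The middle factor equals $\|f\|_{L^{p}(\lambda)}^{p/r'}$ by the inversion identity $d\rho(y)=d\lambda(y^{-1})$ (which holds exactly, not just up to a constant, under the paper's normalization $d\lambda=\delta\,d\rho$) followed by left invariance of $\lambda$; the last factor equals $\bigl(\int_{\G}|g|^{r}\,d\rho\bigr)^{1/p'}=\|\check g\|_{L^{r}(\lambda)}^{r/p'}$ by the same inversion identity. Then raise to the power $q$, integrate in $d\lambda(x)$, and use Fubini together with $\int_{\G}|f(xy^{-1})|^{p}\,d\lambda(x)=\delta(y)\,\|f\|_{L^{p}(\lambda)}^{p}$: the factor $\delta(y)$ upgrades the remaining integral $\int_{\G}|g(y)|^{r}\delta(y)\,d\rho(y)$ to $\|g\|_{L^{r}(\lambda)}^{r}$, and the exponent count $pq/r'+p=q$ gives precisely \eqref{Young_for}. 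The one imprecision in your narrative is the mechanism you give for the asymmetry: no change of variables is performed inside the convolution, and both copies of $g$ enter as $g(y)$. The reflection is created at the moment the lone $\rho$-integral of $|g|^{r}$ --- the copy \emph{not} paired with $f$ in the H\"older split --- is rewritten as a $\lambda$-norm, while the copy paired with $f$ stays unreflected because the modular factor produced by the $x$-integration converts its $d\rho$ into $d\lambda$. Assigning the reflection to the other copy would yield $\|\check g\|^{r/q}_{L^{r}(\lambda)}\|g\|^{r/p'}_{L^{r}(\lambda)}$ instead of the stated $\|\check g\|^{r/p'}_{L^{r}(\lambda)}\|g\|^{r/q}_{L^{r}(\lambda)}$, so this is exactly the point your plan flagged but did not resolve; with that detail supplied, the argument is sound.
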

\begin{rem} For a simpler version of Young's inequality on general locally compact groups we refer to \cite[cf. Lemma 2.1]{KR78}.
\end{rem}

The following integral Hardy inequalities on general metric measure spaces, which are the special cases of \cite[Theorems 2.1 and 3.1 (a)]{Myn23} (see also \cite{Sin22}), play important role in the proof of the main results:
\begin{thm}\label{high_Hardy_thm} Let $\mathbb{X}$ be a metric measure space with a $\sigma$-finite measure. Let $0$ be a fixed element of $\mathbb{X}$ and $|x|=d(0, x)$. Let $1<p\leq q<\infty$. Let $\{\phi_{i}\}_{i=1}^{2}$ and $\{\psi_{i}\}_{i=1}^{2}$ be positive functions on $\X$. Then the inequalities
\begin{equation}\label{integ_Hardy1}
\left(\int_{\X}\left(\int_{B(0,{|x|/2})}f(z)dz\right)^{q}\phi_{1}(x)dx\right)^{\frac{1}{q}}\leq A_{1}
\left(\int_{\X}(f(x))^{p}\psi_{1}(x)dx\right)^{\frac{1}{p}}
\end{equation}
and
\begin{equation}\label{integ_Hardy2}
\left(\int_{\X}\left(\int_{\X\backslash B(0,{2|x|})}f(z)dz\right)^{q}\phi_{2}(x)dx\right)^{\frac{1}{q}}\leq A_{2}
\left(\int_{\X}(f(x))^{p}\psi_{2}(x)dx\right)^{\frac{1}{p}}
\end{equation}
hold for all $f\geq 0$ a.e. on $\X$ if we have
\begin{equation}\label{integ_Hardy1_cond}
B_{1}:=\sup_{R>0}\left(\int_{\{|x|\geq R\}}\phi_{1}(x)dx\right)^{\frac{1}{q}}
\left(\int_{\{|x|< R\}}(\psi_{1}(x))^{1-p'}dx\right)^{\frac{1}{p'}}<\infty
\end{equation}
and
\begin{equation}\label{integ_Hardy2_cond}
B_{2}:=\sup_{R>0}\left(\int_{\{|x|\leq R\}}\phi_{2}(x)dx\right)^{\frac{1}{q}}
\left(\int_{\{|x|\geq R\}}(\psi_{2}(x))^{1-p'}dx\right)^{\frac{1}{p'}}<\infty,
\end{equation}
respectively. Moreover, if $\{A_{i}\}_{i=1}^{2}$ are the smallest constants for which \eqref{integ_Hardy1} and \eqref{integ_Hardy2} hold, then
\begin{equation}\label{integ_Hardy5}
c_{1}^{i}B_{i}\leq A_{i}\leq c_{2}^{i}B_{i},\;\; i=1,2.
\end{equation}
\end{thm}
\begin{rem} In the setting of metric measure spaces, first Theorem \ref{high_Hardy_thm} was proved in \cite{RV18} on metric measure spaces possessing polar decompositions. We can also refer to \cite{AR24} for the analysis of polar decompositions in metric measure spaces. To avoid such technicalities, we will be using this result as it follows from \cite{Myn23}.
\end{rem}

\section{Proof of Main results}
\label{SEC:Weight_Sob}
\subsection{Proof of Theorem \ref{main1_thm}}
\label{SEC:Weight_Sob1}
In this section we prove Theorems \ref{main1_thm}, \ref{main3_thm}, \ref{CKN_thm}, \ref{CKN_thm_crit} and Corollary \ref{uncer_cor} when $\G$ is noncompact, and in the case when $\G$ is compact we refer to Section \ref{SEC:app_compact} for the differences in the argument in this setting.

Before starting the proof, we need to prove the following lemma:
\begin{lem}\label{Bes_est_lem}
Let $a,s\in \mathbb{R}$ and $r>0$. If $c'>0$ is sufficiently large, then we have
\begin{equation}\label{Bes_est_lem_for1}\int_{B_{1}^{c}}|\delta^{a}\chi^{s}{\rm e}^{-c'|x|}|^{r}d\rho<\infty.
\end{equation}
\end{lem}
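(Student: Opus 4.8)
The plan is to split \eqref{Bes_est_lem_for1} into its two assertions — the comparison between the two integrals, and the finiteness of the larger one — and to treat them separately.

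For the first inequality I would argue pointwise. Since $\delta$ and $\chi$ are positive characters, the weight $\delta^{a}\chi^{s}$ is a positive function that does not involve any distance, so $|\delta^{a}\chi^{s}e^{-c'|x|}|^{r}=(\delta^{a}\chi^{s})^{r}e^{-rc'|x|}$ and likewise with $|x|_{R}$ in place of $|x|$. Because the Riemannian distance is no greater than the Carnot-Carath\'eodory distance (as recalled in Subsection \ref{sub_SEC_2:prelim}, just before \eqref{double_charac}), we have $|x|_{R}\leq |x|$, whence $e^{-rc'|x|}\leq e^{-rc'|x|_{R}}$ for $c',r>0$. Multiplying by the nonnegative factor $(\delta^{a}\chi^{s})^{r}$ and integrating over $B_{1}^{c}$ gives the first inequality in \eqref{Bes_est_lem_for1} immediately.

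For the finiteness I would prove the stronger bound $\int_{\G}(\delta^{a}\chi^{s})^{r}e^{-rc'|x|_{R}}\,d\rho<\infty$, which dominates the middle integral of \eqref{Bes_est_lem_for1} since the integrand is nonnegative and $B_{1}^{c}\subset\G$. The key point is that $\eta:=\delta^{ar}\chi^{sr}=(\delta^{a}\chi^{s})^{r}$ is itself a positive character of $\G$: a real power of a positive character is a positive character, and a product of characters is a character. Hence Lemma \ref{lem2.3}(ii), which by Remark \ref{lem_Riem_dist} may be used with the Riemannian distance, applies to $\eta$ and yields a constant $C=C(\eta)>0$ with $\int_{\{|x|_{R}<t\}}\eta\,d\rho\leq e^{Ct}$ for all $t>1$. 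I would then decompose $\G$ into Riemannian annuli and estimate by a geometric series: writing $\int_{\G}\eta\,e^{-rc'|x|_{R}}\,d\rho=\sum_{k=0}^{\infty}\int_{\{k\leq|x|_{R}<k+1\}}\eta\,e^{-rc'|x|_{R}}\,d\rho$, the $k$-th term is bounded by $e^{-rc'k}\int_{\{|x|_{R}<k+1\}}\eta\,d\rho$. The term $k=0$ is finite because $\eta$ is continuous, hence bounded, on the relatively compact Riemannian unit ball (complete Lie groups have compact closed balls by Hopf--Rinow), which carries finite Haar measure; for $k\geq1$ one inserts $\int_{\{|x|_{R}<k+1\}}\eta\,d\rho\leq e^{C(k+1)}$. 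This gives $\int_{\G}\eta\,e^{-rc'|x|_{R}}\,d\rho\lesssim 1+e^{C}\sum_{k\geq1}e^{(C-rc')k}$, which converges as soon as $rc'>C$, i.e. for $c'>C/r$ (``$c'$ sufficiently large'').

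The only genuinely substantive point, and the place I would expect to spend the most care, is this last convergence threshold: one must be sure that the exponential decay from $e^{-rc'|x|_{R}}$ strictly beats the at-most-exponential growth of $\int_{\{|x|_{R}<t\}}\eta\,d\rho$ furnished by Lemma \ref{lem2.3}(ii) (equivalently by \eqref{V_great1} together with Proposition \ref{prop5.7}), which is exactly why $c'$ must be taken large. Everything else reduces to the elementary comparison $|x|_{R}\leq|x|$ and to the observation that $\delta^{ar}\chi^{sr}$ is a positive character.
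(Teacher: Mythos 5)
Your proof is correct and follows essentially the same route as the paper: the paper likewise bounds the integral by decomposing $\{|x|_{R}\geq 1\}$ into (dyadic rather than unit-width) Riemannian annuli, applies Lemma \ref{lem2.3}(ii) to the positive character $\delta^{ra}\chi^{rs}$, and sums the resulting series, which converges because $c'$ is large; the comparison via $|x|_{R}\leq|x|$ is implicit there exactly as you make it explicit. Your extra care with the $k=0$ term and with stating that powers and products of positive characters are again positive characters only fills in details the paper leaves tacit.
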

Actually, the proof of this lemma follows from the proof of \cite[Corollary 4.2]{BPTV18}, but to be more precise, let us give it.
\begin{proof}[Proof of Lemma \ref{Bes_est_lem}] Taking into account Lemma \ref{lem2.3} (ii), a direct calculation gives that
\begin{equation*}
\begin{split}
\int_{B_{1}^{c}}|\delta^{a}\chi^{s}{\rm e}^{-c'|x|}|^{r}d\rho&=\sum_{k=0}^{\infty}{\rm e}^{-rc^{\prime}2^{k}}\int_{\{2^{k}\leq|x|<
2^{k+1}\}}
(\delta(x))^{ra}(\chi(x))^{rs}d\rho(x)\\&
\lesssim\sum_{k=0}^{\infty}{\rm e}^{-rc^{\prime}2^{k}}{\rm e}^{C\cdot2^{k}}<\infty,
\end{split}
\end{equation*}
since $c^{\prime}$ is large enough.
\end{proof}
Once we prove the special case $\chi=\delta$ of Theorem \ref{main1_thm}, then we can immediately obtain Theorem \ref{main1_thm} by Proposition \ref{prop3.5}. Therefore, let us prove the following theorem:
\begin{thm}\label{main2_thm}
Let $\alpha>0$, $0\leq \beta<d$ and $1<p,q<\infty$. Then we have
\begin{equation}\label{main2_thm_for1}
\left\|\frac{f}{|x|^{\frac{\beta}{q}}}\right\|_{L^{q}(\lambda)}\lesssim \|f\|_{L^{p}_{\alpha}(\lambda)}
\end{equation}
for all $q\geq p$ such that $1/p-1/q\leq \alpha/d-\beta/(dq)$.
\end{thm}
\begin{rem}\label{rem_only_if_left} When $\beta=0$, in \cite[Section 4]{BPTV18}, it was shown that an embedding of the form
$$L^{p}_{\alpha}(\mu_{\chi})\hookrightarrow L^{q}(\mu_{\chi}),\;1<p<\infty, \;\alpha\geq0, \;q\in(1,\infty]\backslash\{p\}$$
for some positive character $\chi$ may hold only if $\mu_{\chi}=\lambda$ is the left Haar measure of $\G$. In the exactly same
way, one can show that the same statement is true for the weighted Sobolev embedding case. However, this is different for $q=p$, see \eqref{lef_rig_Hardy1}.
\end{rem}
\begin{proof}[Proof of Theorem \ref{main2_thm}] Notice that we may reduce to prove Theorem \ref{main2_thm} when $0<\alpha<d$, since when $\alpha \geqslant d$ we may find $0<\alpha^{\prime}<d$ such that
$$
\frac{\alpha^{\prime}}{d}-\frac{\beta}{d q} \geqslant 1-\frac{1}{q}>\frac{1}{p}-\frac{1}{q},
$$
so that the condition $1 / p-1 / q \leqslant \alpha^{\prime} / d-\beta /(d q)$ holds. Then, we apply Theorem \ref{main2_thm} to $\alpha^{\prime}\in (0,d)$ and use the Sobolev embedding $L_\alpha^p(\lambda) \subseteq L_{\alpha^{\prime}}^p(\lambda)$ (see \eqref{Sob_emb_unimod1} when $q=p$ and $\mu_{\chi}=\lambda$). Therefore, it is enough to prove Theorem \ref{main2_thm} when $0<\alpha<d$.

By \eqref{Sob_norm_Bes1} and \eqref{Sob_norm_Bes2}, we note that to obtain \eqref{main2_thm_for1} it is enough to prove the following
$$\int_{\G}|(f\ast G^{c}_{\alpha,\chi})(x)|^{q}\frac{d\lambda(x)}{|x|^{\beta}}\lesssim \|f\|^{q}_{L^{p}(\lambda)}.$$

For this, let us split the left-hand side of above inequality into three parts as follows
\begin{equation}\label{M123}
\int_{\G}|(f\ast G^{c}_{\alpha,\chi})(x)|^{q}\frac{d\lambda(x)}{|x|^{\beta}}\leq 3^{q}(M_{1}+M_{2}+M_{3}),
\end{equation}
where
$$M_{1}:=\int_{\G}\left(\int_{\{2|y|<|x|\}}| G^{c}_{\alpha,\chi}(y^{-1}x)f(y)|d\lambda(y)\right)^{q}
\frac{d\lambda(x)}{|x|^{\beta}},$$
$$M_{2}:=\int_{\G}\left(\int_{\{|x|\leq2|y|<4|x|\}}| G^{c}_{\alpha,\chi}(y^{-1}x)f(y)|d\lambda(y)\right)^{q}
\frac{d\lambda(x)}{|x|^{\beta}}$$
and
$$M_{3}:=\int_{\G}\left(\int_{\{|y|\geq 2|x|\}}|G^{c}_{\alpha,\chi}(y^{-1}x)f(y)|d\lambda(y)\right)^{q}
\frac{d\lambda(x)}{|x|^{\beta}}.$$
Let us start by estimating the first term $M_{1}$. By using the reverse triangle inequality and $2|y|<|x|$ we have
\begin{equation}
\label{quasi_Euc_norm1}
\begin{split}
&|y^{-1}x|\geq |x|-|y|>|x|-\frac{|x|}{2}=\frac{|x|}{2},\\&
|y^{-1}x|\leq |x|+|y|<\frac{3|x|}{2}.
\end{split}
\end{equation}
Taking into account this, for $M_{1}$ we write 
\begin{equation*}
M_{1}\leq
\int_{\G}\left(\int_{\{2|y|<|x|\}}|f(y)|d\lambda(y)\right)^{q}\left(\sup_{\{|x|<2|z|<3|x|\}}|G^{c}_{\alpha,\chi}(z)|\right)^{q}
\frac{d\lambda(x)}{|x|^{\beta}}.
\end{equation*}
In order to apply the integral Hardy inequality \eqref{integ_Hardy1}, let us check the following condition \eqref{integ_Hardy1_cond}:
\begin{equation}\label{check1}
\left(\int_{\{2r_{0}\leq |x|\}}\left(\sup_{\{|x|<2|z|<3|x|\}}|G_{\alpha,\chi}^{c}(z)|\right)^{q}
\frac{d\lambda(x)}{|x|^{\beta}}\right)^{\frac{1}{q}}
\left(\int_{\{|x|< 2r_{0}\}}d\lambda(x)\right)^{\frac{1}{p^{\prime}}}<\infty
\end{equation}
for all $r_{0}>0$. Indeed, once \eqref{check1} has been established, the integral Hardy inequality \eqref{integ_Hardy1} implies 
\begin{equation}\label{M1}
M_{1}^{\frac{1}{q}}\leq C\|f\|_{L^{p}(\lambda)},
\end{equation}
where $C$ does not depend on $f$.

Now, let us check \eqref{check1}. By Lemma \ref{lemma4.1_orig} we have
\begin{equation}\label{A_alp}
\sup_{\{|x|<2|z|<3|x|\}}|G_{\alpha,\chi}^{c}(z)|\\ \leq C_{1}
\begin{cases} |x|^{\alpha-d}\;\text{\;if}\;\;0<\alpha<d\;\text{\;and}\;\;|z|\leq1,\\
{\rm e}^{C_{2}|x|}{\rm e}^{-c'|x|/2}\;\text{\;if}\;\;|z|>1\end{cases}
\end{equation}
where we have used $\underset{\{|x|<2|z|<3|x|\}}{\rm sup}\chi^{-1/2}(z)\leq {\rm e}^{C_{2}|x|}$ by Proposition \ref{prop5.7}. 

For this, we consider the following cases: $r_{0}>1$ and $0<r_{0}\leq 1$.

In the case $r_{0}>1$, we have $2<2r_{0}\leq |x|<2|z|$. Then using \eqref{A_alp} one has
\begin{multline}\label{check2}
\int_{\{2r_{0}\leq |x|\}}\left(\sup_{\{|x|<2|z|<3|x|\}}|G_{\alpha,\chi}^{c}(z)|\right)^{q}
\frac{d\lambda(x)}{|x|^{\beta}}
\leq C_{1}\frac{{\rm e}^{-qc^{\prime}\frac{r_{0}}{4}}}{r_{0}^{\beta}}\int_{\{2r_{0}\leq |x|\}}{\rm e}^{-c^{\prime}q\frac{|x|}{4}}{\rm e}^{C_{2}q|x|}
d\lambda(x)\\
= C_{1}\frac{{\rm e}^{-qc^{\prime}\frac{r_{0}}{4}}}{r_{0}^{\beta}}\sum_{k=0}^{\infty}\int_{\{2^{k+1}r_{0}\leq |x|\leq
2^{k+2}r_{0}\}}{\rm e}^{-c^{\prime}q\frac{|x|}{4}}{\rm e}^{C_{2}q|x|}
d\lambda(x)\\
\leq C_{1}\frac{{\rm e}^{-qc^{\prime}\frac{r_{0}}{4}}}{r_{0}^{\beta}}\sum_{k=0}^{\infty}{\rm e}^{-c^{\prime}q2^{k-1}r_{0}}{\rm e}^{C_{2}2^{k+2}r_{0}q}
\int_{\{2^{k+1}r_{0}\leq |x|\leq
2^{k+2}r_{0}\}}
d\lambda(x)
\lesssim r_{0}^{-\beta}{\rm e}^{-qc^{\prime}\frac{r_{0}}{4}},
\end{multline}
where the sum is finite since $c$ (hence $c^{\prime}$) is large enough.

By Part (ii) of Lemma \ref{lem2.3}, for $r_{0}>1$ we also have
\begin{equation}\label{check3}
\int_{\{|x|< 2r_{0}\}}d\lambda(x)=\int_{\{|x|< 2r_{0}\}}\delta d\rho(x)\leq {\rm e}^{2Cr_{0}}.
\end{equation}

Then, plugging \eqref{check2} and \eqref{check3} into \eqref{check1} we obtain
\begin{multline}\label{check4}
\left(\int_{\{2r_{0}\leq |x|\}}\left(\sup_{\{|x|<2|z|<3|x|\}}|G_{\alpha,\chi}^{c}(z)|\right)^{q}
\frac{d\lambda(x)}{|x|^{\beta}}\right)^{\frac{1}{q}}
\left(\int_{\{|x|< 2r_{0}\}}d\lambda(x)\right)^{\frac{1}{p^{\prime}}}\\\lesssim r_{0}^{-\frac{\beta}{q}}{\rm e}^{-c^{\prime}\frac{r_{0}}{4}}
{\rm e}^{2C\frac{r_{0}}{p^{\prime}}}
<\infty,
\end{multline}
since $c$ (hence $c^{\prime}$) is large enough. 

Now, we check the condition \eqref{check1} for $0<r_{0}\leq 1$. As noticed above, when $|z|>1$ due to the exponential decay $G_{\alpha,\chi}^{c}(z)$ from \eqref{A_alp} we can easily obtain \eqref{check1}. So, let us discuss the case $|z|\leq 1$. In this case, taking into account $|x|<2|z|\leq 2$ we write
\begin{multline}\label{check5}
\int_{\{2r_{0}\leq |x|\}}\left(\sup_{\{|x|<2|z|<3|x|\}}|G_{\alpha,\chi}^{c}(z)|\right)^{q}
\frac{d\lambda(x)}{|x|^{\beta}}
\\=
\int_{\{2r_{0}\leq |x|\leq 1\}}\left(\sup_{\{|x|<2|z|<3|x|\}}|G_{\alpha,\chi}^{c}(z)|\right)^{q}\frac{d\lambda(x)}{|x|^{\beta}}\\
+\int_{\{1<|x|<2\}}\left(\sup_{\{|x|<2|z|<3|x|\}}|G_{\alpha,\chi}^{c}(z)|\right)^{q}\frac{d\lambda(x)}{|x|^{\beta}}.
\end{multline}
Using estimate \eqref{A_alp}, one can observe that the last integral is finite. To estimate the first integral on the right-hand side of \eqref{check5}, we split it into two cases: $(\alpha-d)q-\beta+d\neq 0$ and $(\alpha-d)q-\beta+d= 0$. In the first case, taking into account \eqref{A_alp}, we calculate
\begin{equation}\label{check6}
\begin{split}\int_{\{2r_{0}\leq |x|\leq 1\}}\left( \sup_{\{|x|<2|z|<3|x|\}}|G_{\alpha,\chi}^{c}(z)|\right)^{q}\frac{d\lambda(x)}{|x|^{\beta}}
&\lesssim\int_{\{2r_{0}\leq |x|\leq 1\}}|x|^{(\alpha-d)q-\beta} d\lambda(x)
\\&\lesssim \int_{2r_{0}}^{1}u^{(\alpha-d)q-\beta}u^{d-1}du \\& \lesssim 1+r_{0}^{(\alpha-d)q-\beta+d}.
\end{split}
\end{equation}
For the inequality of passing from the integral with respect to $d\lambda$ to the one with respect to $u$, we first observe that the left Haar measure is absolutely continuous with respect to the Riemannian measure. Indeed, if one considers a full form on the Lie algebra of $\G$, it can be moved around by the group action to yield the left Haar measure on $\G$. By the uniqueness of the left Haar measure, one gets the absolute continuity as above, with the left Haar measure being a smooth multiple of the volume measure. Consequently, $d\lambda$ is absolutely continuous with respect to the radial measure, see e.g. \cite[Corollary 2, p. 81]{AR24}, but the question of the Jacobian remains. However, for an estimate (as opposed to the exact equality), we can give a short direct argument.

Let $B_r$ denote the ball, centred at a fixed point, of radius $r$ with respect to the Carnot-Carath\'eodory distance, that is, $x\in B_r$ if $|x|<r$. Let us introduce the function $s=s(r)$ given by $s(r):=\lambda(B_r)^{1/d}$, where $d\lambda$ is the left Haar measure on $\G$, and $d$ is the local dimension of $\G$. Let us identify the balls with radii given by $s(r)$ and $r$, by writing $\tilde{B}_s=B_r.$ Then we have
$\lambda(\tilde{B}_s)=\lambda(B_r)=s^d$. Since $\lambda(B_r)\leq c r^d,$ we have that 
$$ s^d=\lambda(\tilde{B}_s)=\lambda(B_r)\leq c r^d,$$
that is, $s\leq c r$ for some $c>0$. Consequently, for any $\gamma>0$, we have $r^{-\gamma}\leq c s^{-\gamma}$ for some $c>0$, and $B_{r_0}\subset {B}_{s_0/c}$, for $s_0=s(r_0).$ Consequently, we can estimate, with $r=|x|,$ and using that now we have the equality $\lambda(\tilde{B}_s)=s^d$,
\begin{equation}\label{ccest}
\int_{B_{r_0}} r^{-\gamma} d\lambda(x)\leq C\int_{B_{s_0/c}} s^{-\gamma} d\lambda(x)\leq C\int_{0}^{s_0/c} s^{-\gamma} s^{d-1} ds<\infty,
\end{equation}
provided that $\gamma<d.$ Applying and adapting arguments of this type here and in the sequel, justifies local estimates like the one in
\eqref{check6}.

Taking into account \eqref{check5} and \eqref{check6} in \eqref{check1}, we have for any $0<r_{0}\leq 1$ that
\begin{multline}\label{check8}
\left(\int_{\{2r_{0}\leq |x|\}}\left(\sup_{\{|x|<2|z|<3|x|\}}|G_{\alpha,\chi}^{c}(z)|\right)^{q}
\frac{d\lambda(x)}{|x|^{\beta}}\right)^{\frac{1}{q}}
\left(\int_{\{|x|< 2r_{0}\}}d\lambda(x)\right)^{\frac{1}{p^{\prime}}}\\\leq Cr_{0}^{\frac{d}{p^{\prime}}}
(1+r_{0}^{\frac{(\alpha-d)q-\beta+d}{q}})<\infty
\end{multline}
since $1/p-1/q\leq \alpha/d-\beta/(dq)$.

Now, in the case $(\alpha-d)q-\beta+d=0$, from \eqref{check6} and noting the fact that $r_{0}^{\frac{d}{p^{\prime}}}\left|\log r_0\right|^{\frac{1}{q}}\rightarrow0$ as $r_{0}\rightarrow0$ we have
\begin{multline}\label{check8_11}
\left(\int_{\{2r_{0}\leq |x|\}}\left(\sup_{\{|x|<2|z|<3|x|\}}|G_{\alpha,\chi}^{c}(z)|\right)^{q}
\frac{d\lambda(x)}{|x|^{\beta}}\right)^{\frac{1}{q}}
\left(\int_{\{|x|< 2r_{0}\}}d\lambda(x)\right)^{\frac{1}{p^{\prime}}}\\\leq Cr_{0}^{\frac{d}{p^{\prime}}}
\left|\log r_0\right|^{\frac{1}{q}}<\infty
\end{multline}
for all $0<r_{0}\leq 1$.

Now let us estimate $M_{3}$. Similarly to \eqref{quasi_Euc_norm1}, it is easy to see that the condition $2|x|<|y|$ implies $|y|<2|y^{-1}x|<3|y|$. Then, taking into account this and \eqref{A_alp} we obtain for $M_{3}$ that
$$M_{3}\leq C\int_{\G}\left(\int_{\{|y|\geq 2|x|\}}|f(y)|
\sup_{\{|y|\leq 2|z|\leq 3|y|\}}|G_{\alpha,\chi}^{c}(z)|d\lambda(y)\right)^{q}\frac{d\lambda(x)}
{|x|^{\beta}}.$$
Here, we now apply the conjugate integral Hardy inequality \eqref{integ_Hardy2} for $M_{3}$, for which we need to check the following condition \eqref{integ_Hardy2_cond}:
\begin{equation}\label{check9}
\left(\int_{\{|x|\leq 2r_{0}\}}\frac{d\lambda(x)}{|x|^{\beta}}\right)^{\frac{1}{q}}
\left(\int_{\{2r_{0}\leq |y|\}}\left(
\sup_{\{|y|\leq 2|z|\leq 3|y|\}}|G_{\alpha,\chi}^{c}(z)|\right)^{p^{\prime}}d\lambda(y)\right)^{\frac{1}{p^{\prime}}}<\infty
\end{equation}
for all $r_{0}>0$. Indeed, once \eqref{check9} has been established, the conjugate integral Hardy inequality \eqref{integ_Hardy2} yields 
\begin{equation}\label{M3}
M_{3}^{\frac{1}{q}}\leq C\|f\|_{L^{p}(\lambda)},
\end{equation}
where $C$ does not depend on $f$.

For this, we again consider two cases: $r_{0}>1$ and $0<r_{0}\leq 1$. When $r_{0}>1$, hence $|z|>1$, then as in \eqref{check2} we have
\begin{equation}\label{check10}
\int_{\{2r_{0}\leq |y|\}}\left(\sup_{\{|y|\leq 2|z|\leq 3|y|\}}|G_{\alpha,\chi}^{c}(z)|\right)^{p^{\prime}}
d\lambda(y) \leq C{\rm e}^{-p^{\prime}c^{\prime}\frac{r_{0}}{4}}.
\end{equation}
Applying Part (ii) of Lemma \ref{lem2.3}, one gets for $r_{0}>1$ that
\begin{equation}\label{check11}
\begin{split}
\int_{\{|x|\leq 2r_{0}\}}\frac{d\lambda(x)}{|x|^{\beta}}& \leq \int_{\{|x|\leq 1\}}\frac{d\lambda(x)}{|x|^{\beta}}+
\int_{\{1< |x|\leq 2r_{0}\}}d\lambda(x) \\& \leq C\int_{0}^{1}u^{d-1-\beta}du+
\int_{\{|x|\leq 2r_{0}\}}\delta d\rho(x)\leq C_{3}+{\rm e}^{C_{4}r_{0}}
\end{split}
\end{equation}
for some positive constants $C_{3}$ and $C_{4}$. Then, putting \eqref{check10} and \eqref{check11} in \eqref{check9}, we obtain 
\begin{multline}\label{check12}
\left(\int_{\{|x|\leq 2r_{0}\}}\frac{d\lambda(x)}{|x|^{\beta}}\right)^{\frac{1}{q}}
\left(\int_{\{2r_{0}\leq |y|\}}\left(\sup_{\{|y|\leq 2|z|\leq 3|y|\}}|G_{\alpha,\chi}^{c}(z)|\right)^{p^{\prime}}d\lambda(y)\right)^{\frac{1}{p^{\prime}}}\\\leq C
(C_{3}+{\rm e}^{C_{4}r_{0}})^{\frac{1}{q}}{\rm e}^{-c^{\prime}\frac{r_{0}}{4}}<\infty
\end{multline}
since $c$ (hence $c^{\prime}$) is large enough. 

Now, let us check the condition \eqref{check9} for $0<r_{0}\leq 1$. When $|z|>1$ we readily obtain \eqref{check9} because of the estimate \eqref{A_alp}. In the case $|z|\leq 1$, as in \eqref{check5} and \eqref{check6}, we obtain for $(\alpha-d)p^{\prime}+d\neq 0$ that
\begin{equation}\label{check13}
\int_{\{2r_{0}\leq |y|\}}\left( \sup_{\{|y|\leq 2|z|\leq 3|y|\}}|G_{\alpha,\chi}^{c}(z)|\right)^{p^{\prime}}d\lambda(x)\leq C (1+r_{0}^{(\alpha-d)p^{\prime}+d}).
\end{equation}
Using this in \eqref{check9} implies for $0<r_{0}\leq 1$ that
\begin{multline}\label{check16}
\left(\int_{\{|x|\leq 2r_{0}\}}\frac{d\lambda(x)}{|x|^{\beta}}\right)^{\frac{1}{q}}
\left(\int_{\{2r_{0}\leq |y|\}}\left(\sup_{\{|y|\leq 2|z|\leq 3|y|\}}|G_{\alpha,\chi}^{c}(z)|\right)^{p^{\prime}}d\lambda(y)\right)^{\frac{1}{p^{\prime}}}\\ \leq C
(1+r_{0}^{(\alpha-d)p^{\prime}+d})^{\frac{1}{p^{\prime}}}r_{0}^{\frac{d-\beta}{q}}<\infty
\end{multline}
since $d>\beta$ and $1/p-1/q\leq \alpha/d-\beta/(dq)$.

Note that \eqref{check16} is still finite for $(\alpha-d)p^{\prime}+d=0$ and $0<r_{0}\leq 1$, since as in \eqref{check6} and \eqref{check8_11} we have
\begin{multline}\label{check16_11}
\left(\int_{\{|x|\leq 2r_{0}\}}\frac{d\lambda(x)}{|x|^{\beta}}\right)^{\frac{1}{q}}
\left(\int_{\{2r_{0}\leq |y|\}}\left(\sup_{\{|y|\leq 2|z|\leq 3|y|\}}|G_{\alpha,\chi}^{c}(z)|\right)^{p^{\prime}}d\lambda(y)\right)^{\frac{1}{p^{\prime}}}\\ \leq C
|\log r_{0}|^{\frac{1}{p^{\prime}}}r_{0}^{\frac{d-\beta}{q}}<\infty
\end{multline}
since $|\log r_{0}|^{\frac{1}{p^{\prime}}}r_{0}^{\frac{d-\beta}{q}}\rightarrow0$ as $r_{0}\rightarrow0$ when $d>\beta$.

Now, it remains to estimate $M_{2}$. We rewrite $M_{2}$ as
$$M_{2}=\sum_{k\in\mathbb{Z}}\int_{\{2^{k}\leqslant |x|<2^{k+1}\}}\left(\int_{\{|x|\leqslant 2|y|\leqslant 4|x|\}}|
G_{\alpha,\chi}^{c}(y^{-1}x)f(y)|d\lambda(y)\right)^{q}\frac{d\lambda(x)}{|x|^{\beta}}.$$
We obtain that $2^{k-1}\leqslant |y|<2^{k+2}$ from $|x|\leqslant 2|y|\leqslant 4|x|$ and $2^{k}\leqslant |x|<2^{k+1}$. Let us show that $G_{\alpha,\chi}^{c} \in L^{r}(\lambda)$ for $r\in [1,\infty]$ such that $1+\frac{1}{q}=\frac{1}{r}+\frac{1}{p}$, which is useful in the rest of proof. Indeed, by Lemmata \ref{lemma4.1_orig} and \ref{Bes_est_lem} we see that
\begin{multline}\label{Bes_est_young1}
\int_{\G}| G_{\alpha,\chi}^{c}(x)|^{r}d\lambda(x)=\int_{\{|x|<1\}}| G_{\alpha,\chi}^{c}(x)|^{r}d\lambda(x)
+\int_{\{|x|\geq1\}}| G_{\alpha,\chi}^{c}(x)|^{r}d\lambda(x)\\ \leq 
\mathfrak{C}_{1}\int_{0}^{1}u^{(\alpha-d)r}u^{d-1}du+\mathfrak{C}_{2}\int_{\{|x|\geq1\}}(\chi(x))^{-r/2}{\rm e}^{-c^{\prime}r|x|}\delta d\rho(x)<\infty
\end{multline}
for some positive $\mathfrak{C}_{1}$ and $\mathfrak{C}_{2}$, since $\alpha>d(1/p-1/q)$ and $c$ (hence $c^{\prime}$) is large enough. Similarly, one can show that
\begin{equation}\label{Bes_est_young2}
\|\check{G}_{\alpha,\chi}^{c}\|_{L^{r}(\lambda)}<\infty,
\end{equation}
where $\check{G}_{\alpha,\chi}^{c}(x)=G_{\alpha,\chi}^{c}(x^{-1})$.

Then, taking into account \eqref{Bes_est_young1} and \eqref{Bes_est_young2}, and applying Young's inequality \eqref{Young_for}
for $1+\frac{1}{q}=\frac{1}{r}+\frac{1}{p}$ with $r\in [1,\infty]$ we calculate
\begin{equation}\label{M22}
\begin{split}
M_{2}&\leq C \sum_{k\in\mathbb{Z}}\int_{\{2^{k}\leqslant |x|<2^{k+1}\}}\left(\int_{\{|x|\leqslant 2|y|\leq 4|x|\}}|
G_{\alpha,\chi}^{c}(y^{-1}x)f(y)|d\lambda(y)\right)^{q}d\lambda(x)\\&
\leq C\sum_{k\in\mathbb{Z}} \|[f\cdot\chi_{\{2^{k-1}\leqslant |\cdot|<2^{k+2}\}}]\ast G_{\alpha,\chi}^{c}\|^{q}_{L^{q}(\lambda)}\\& \leq C\|
\check{G}_{\alpha,\chi}^{c}\| ^{qr/p^{\prime}}_{L^{r}(\lambda)}
\| G_{\alpha,\chi}^{c}\| ^{r}_{L^{r}(\lambda)}
\sum_{k\in\mathbb{Z}}\|f\cdot\chi_{\{2^{k-1}\leqslant |\cdot|<2^{k+2}\}}\|^{q}_{L^{p}(\lambda)}\\&
=C \sum_{k\in\mathbb{Z}}\left(\int_{\{2^{k}\leqslant |x|<2^{k+1}\}}|f(x)|^{p}d\lambda(x)\right)^{\frac{q}{p}}\\& 
=C\|f \|^{q}_{L^{p}(\lambda)}.
\end{split}
\end{equation}

Thus, \eqref{M1}, \eqref{M3}, \eqref{M22} and \eqref{M123} complete the proof of Theorem
\ref{main2_thm}.
\end{proof}
\subsection{Proof of Theorem \ref{main3_thm}}
\label{SEC:Weight_Sob2}
We now prove the critical case $\beta=d$ of Theorem \ref{main1_thm} on $B_{1}$.

As in Section \ref{SEC:Weight_Sob1}, we first show the special case $\chi=\delta$ of Theorem \ref{main3_thm}, that is,
\begin{thm}\label{main4_thm}
Let $1<p<r<\infty$ and $1/p+1/p'=1$.
Then we have
\begin{equation}\label{main4_thm_for1}
\left\|\frac{f}{\left(\log\left({\rm e}+\frac{1}{|x|}\right)\right)^{\frac{r}{q}}
|x|^{\frac{d}{q}}}\right\|_{L^{q}( \lambda)}\lesssim \|f\|_{L^{p}_{d/p}(\lambda)}
\end{equation}
for every $q\in [p,(r-1)p')$.
\end{thm}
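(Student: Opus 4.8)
The plan is to reduce \eqref{main4_thm_for1} to a convolution estimate and then run the same three-region decomposition as in the proof of Theorem \ref{main2_thm}, the only differences being that the weight $|x|_{R}^{-\beta}$ is now the critical weight $\left(\log\left(e+\frac{1}{|x|_{R}}\right)\right)^{-r}|x|_{R}^{-d}$ and that $\alpha$ is frozen at the endpoint $d/p$. By \eqref{Sob_norm_Bes1} and \eqref{Sob_norm_Bes2} it suffices to prove
\begin{equation*}
\int_{\G}|(f\ast G^{c}_{d/p,\chi})(x)|^{q}\frac{d\lambda(x)}{\left(\log\left(e+\frac{1}{|x|_{R}}\right)\right)^{r}|x|_{R}^{d}}\lesssim \|f\|^{q}_{L^{p}(\lambda)},
\end{equation*}
which I would split as $3^{q}(M_{1}+M_{2}+M_{3})$ over the regions $\{2|y|_{R}<|x|_{R}\}$, $\{|x|_{R}\le 2|y|_{R}<4|x|_{R}\}$ and $\{|y|_{R}>2|x|_{R}\}$ exactly as in \eqref{M123}. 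Throughout I would use the kernel bound $|G^{c}_{d/p,\chi}|\le A_{d/p}$ from \eqref{less1}, with $A_{d/p}(x)\approx|x|_{R}^{d/p-d}$ near the origin by \eqref{A_alp} (note $0<d/p<d$ since $p>1$), together with its monotonicity in $|x|_{R}$ from Remark \ref{non_incr_rem}.

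For $M_{1}$ the reverse triangle inequality gives $|x|_{R}<2|y^{-1}x|_{R}$, so after bounding $G^{c}_{d/p,\chi}$ by $A_{d/p}(x/2)$ I would apply \eqref{integ_Hardy1} and verify the corresponding $B_{1}$-condition. Near the origin the relevant integral is $\int_{2r_{0}}^{2}u^{-dq/p'-1}\left(\log(1/u)\right)^{-r}\,du$, whose leading behaviour is governed by the lower endpoint and is $\approx r_{0}^{-dq/p'}\left(\log(1/r_{0})\right)^{-r}$; multiplying its $q$-th root by $(\lambda(B_{r_{0}}))^{1/p'}\approx r_{0}^{d/p'}$ produces $\left(\log(1/r_{0})\right)^{-r/q}\to 0$ as $r_{0}\to 0$, while for $r_{0}\ge 1$ the exponential tail of $A_{d/p}$ in \eqref{A_alp} together with Lemmas \ref{Bes_est_lem} and \ref{lem2.3} gives finiteness, so $B_{1}<\infty$. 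The region $M_{2}$ is handled by Young's inequality \eqref{Young_for} over a dyadic decomposition as in \eqref{M22}: one first checks $G^{c}_{d/p,\chi},\check G^{c}_{d/p,\chi}\in L^{s}(\lambda)$ for $s$ defined by $1+1/q=1/s+1/p$, which holds because the local singularity $|x|_{R}^{(d/p-d)s}$ is integrable precisely when $s<p'$, and indeed $1/s=1-1/p+1/q>1/p'$ (here $s\ge 1$ is guaranteed by $q\ge p$); the dyadic sum then telescopes to $\|f\|^{q}_{L^{p}(\lambda)}$.

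The decisive step, and the one fixing the admissible range of $q$, is $M_{3}$. Here $|y|_{R}>2|x|_{R}$ yields $|y|_{R}<2|y^{-1}x|_{R}$, and I would invoke \eqref{integ_Hardy2}, checking the $B_{2}$-condition
\begin{equation*}
\left(\int_{\{|x|_{R}<r_{0}\}}\frac{d\lambda(x)}{\left(\log\left(e+\frac{1}{|x|_{R}}\right)\right)^{r}|x|_{R}^{d}}\right)^{\frac{1}{q}}\left(\int_{\{2r_{0}<|x|_{R}\}}\left|A_{d/p}\left(\frac{x}{2}\right)\right|^{p'}d\lambda(x)\right)^{\frac{1}{p'}}\le B_{2}.
\end{equation*}
The first factor is the critical one: the inner integral behaves near $0$ like $\int_{0}^{r_{0}}\frac{du}{u\left(\log(1/u)\right)^{r}}\approx\frac{1}{r-1}\left(\log(1/r_{0})\right)^{-(r-1)}$, which converges precisely because $r>1$; and since $(d/p-d)p'=-d$, the second factor grows like $\left(\int_{2r_{0}}^{2}u^{-1}\,du\right)^{1/p'}\approx\left(\log(1/r_{0})\right)^{1/p'}$. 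The product is therefore of order $\left(\log(1/r_{0})\right)^{1/p'-(r-1)/q}$ as $r_{0}\to 0$, which stays bounded exactly when $q\le(r-1)p'$, the hypothesis $q<(r-1)p'$ making it tend to $0$ (the range $r_{0}\ge 1$ being again controlled by the exponential tail through Lemmas \ref{Bes_est_lem} and \ref{lem2.3}). I expect the real work to lie precisely here: tracking the competing powers of the logarithm produced by the two factors and confirming that $q<(r-1)p'$ is what keeps $B_{2}$ finite. Combining the three bounds yields \eqref{main4_thm_for1}, and Theorem \ref{main3_thm} then follows for general positive characters $\chi$ by Proposition \ref{prop3.5}.
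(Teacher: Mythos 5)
Your reduction, your three-region splitting, and your treatments of $M_{1}$ and $M_{3}$ track the paper's proof of Theorem \ref{main4_thm} (there the regions are called $N_{1}$, $N_{2}$, $N_{3}$), and your analysis of the $B_{2}$-condition for $M_{3}$ is correct, including the observation that boundedness of the product of logarithms there only requires $q\le (r-1)p'$. The genuine gap is in your treatment of $M_{2}$: you discard the weight on the whole region $\{|x|_{R}\le 2|y|_{R}<4|x|_{R}\}$ and apply Young's inequality to $f\ast G^{c}_{d/p,\chi}$ over dyadic annuli, ``as in \eqref{M22}''. But \eqref{M22} is only the paper's estimate for $M_{22}$, i.e.\ the part of the middle region with $|x|_{R}\ge 2^{k_{0}}$ for a fixed $k_{0}\le -3$, where the weight $\left(\log\left(e+\frac{1}{|x|_{R}}\right)\right)^{-r}|x|_{R}^{-d}$ is bounded. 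On the complementary part, where $|x|_{R}$ (and hence the comparable $|y|_{R}$) tends to $0$, this weight blows up and cannot be absorbed into a constant, so your dyadic Young argument simply does not apply there; this failure occurs for every admissible $q$, not just near the endpoint.

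The missing piece is the paper's estimate of $N_{21}$: since $u\mapsto\left(\log\left(\frac{1}{u}\right)\right)^{r}u^{d}$ is non-decreasing near $0$ and $|y^{-1}x|_{R}\le 3|x|_{R}$ in this region, the weight at $x$ is dominated by the weight at $(y^{-1}x)/3$, which lets one transfer the weight into the convolution kernel, replacing $G^{c}_{d/p,\chi}$ by
\begin{equation*}
g_{2}(x)=\frac{\chi_{B_{\frac{3}{4}}(0)}(x)}{\left(\log\left(\frac{1}{|x|_{R}}\right)\right)^{\frac{r}{q}}|x|_{R}^{\frac{d}{q}+\frac{d}{p'}}},
\end{equation*}
and then apply Young's inequality \eqref{Young_for} to $f\ast g_{2}$. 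The requirement $g_{2}\in L^{\tilde{r}}(\lambda)$ with $1+\frac{1}{q}=\frac{1}{\tilde{r}}+\frac{1}{p}$ reduces, using $\left(\frac{d}{q}+\frac{d}{p'}\right)\tilde{r}=d$ and $\frac{r\tilde{r}}{q}=\frac{rp'}{p'+q}$, to the convergence of $\int_{0}^{3/4}u^{-1}\left(\log\left(\frac{1}{u}\right)\right)^{-\frac{rp'}{p'+q}}du$, i.e.\ to $\frac{rp'}{p'+q}>1$, which is exactly $q<(r-1)p'$ with \emph{strict} inequality. So your diagnosis of where the decisive step lies is also off: $M_{3}$ tolerates $q=(r-1)p'$, and it is the near-origin part of $M_{2}$ --- the part your proposal skips --- that both demands a separate argument and forces the strict inequality in the hypothesis of the theorem.
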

Once we prove Theorem \ref{main4_thm}, then by Proposition \ref{prop3.5} we obtain immediately Theorem \ref{main3_thm}.
Therefore, we only prove Theorem \ref{main4_thm}.
\begin{proof}[Proof of Theorem \ref{main4_thm}] As in the proof of Theorem \ref{main2_thm}, we split the integral into three
parts
\begin{equation}\label{N123}
\int_{\G}|(f\ast G^{c}_{d/p,\chi})(x)|^{q}\frac{d\lambda(x)}{\left|\log\left({\rm e}+\frac{1}{|x|}\right)\right|^{r}|x|^{d}}\leq
3^{q}(N_{1}+N_{2}+N_{3}),
\end{equation}
where
$$N_{1}:=\int_{\G}\left(\int_{\{2|y|<|x|\}}| G^{c}_{d/p,\chi}(y^{-1}x)f(y)|d\lambda(y)\right)^{q}
\frac{d\lambda(x)}{\left|\log\left({\rm e}+\frac{1}{|x|}\right)\right|^{r}|x|^{d}},$$
$$N_{2}:=\int_{\G}\left(\int_{\{|x|\leq2|y|<4|x|\}}| G^{c}_{d/p,\chi}(y^{-1}x)f(y)|d\lambda(y)\right)^{q}\frac{d\lambda(x)}
{\left|\log\left({\rm e}+\frac{1}{|x|}\right)\right|^{r}|x|^{d}}$$
and
$$N_{3}:=\int_{\G}\left(\int_{\{|y|\geq 2|x|\}}| G^{c}_{d/p,\chi}(y^{-1}x)f(y)|d\lambda(y)\right)^{q}\frac{d\lambda(x)}
{\left|\log\left({\rm e}+\frac{1}{|x|}\right)\right|^{r}|x|^{d}}.$$
First, we estimate $N_{1}$. As in the case of $M_{1}$, taking into account \eqref{quasi_Euc_norm1} and \eqref{A_alp} we have
\begin{multline}\label{N1}
N_{1}\leq \\ 
\int_{\G}\left(\int_{\{2|y|<|x|\}}|f(y)|d\lambda(y)\right)^{q}\left(\sup_{\{|x|<2|z|<3|x|\}}|G^{c}_{d/p,\chi}(z)|\right)^{q}
\frac{d\lambda(x)}{\left|\log\left({\rm e}+\frac{1}{|x|}\right)\right|^{r}|x|^{d}}.
\end{multline}
Here, we will apply the integral Hardy inequality \eqref{integ_Hardy1}, for which we need to check the following condition \eqref{integ_Hardy1_cond}:  
\begin{equation}\label{crit_check1}
\left(\int_{\{2r_{0}\leq |x|\}}\left(\sup_{\{|x|<2|z|<3|x|\}}|G^{c}_{d/p,\chi}(z)|\right)^{q}
\frac{d\lambda(x)}{\left|\log\left({\rm e}+\frac{1}{|x|}\right)\right|^{r}|x|^{d}}\right)^{\frac{1}{q}}
\left(\int_{\{|x|< 2r_{0}\}}d\lambda(x)\right)^{\frac{1}{p^{\prime}}}<\infty
\end{equation}
holds for all $r_{0}>0$. Indeed, once \eqref{crit_check1} has been established, the integral Hardy inequality \eqref{integ_Hardy1} gives
\begin{equation}\label{crit_check5}
N_{1}^{\frac{1}{q}}\leq C\|f\|_{L^{p}(\lambda)},
\end{equation}
where $C$ does not depend on $f$.

Let us now verify the condition \eqref{crit_check1}. For this, we consider the cases: $r_{0}>1$ and $0<r_{0}\leq 1$. 

In the case $r_{0}>1$, we have $2<2r_{0}\leq |x|<2|z|$. For $r_{0}>1$, by
\eqref{A_alp} and \eqref{check2} one has
\begin{equation}\label{crit_check2}
\begin{split}
\int_{\{2r_{0}\leq |x|\}}\left(\sup_{\{|x|<2|z|<3|x|\}}|G^{c}_{d/p,\chi}(z)|\right)^{q}
&\frac{d\lambda(x)}{\left|\log\left({\rm e}+\frac{1}{|x|}\right)\right|^{r}|x|^{d}}\\&\leq
\int_{\{2r_{0}\leq |x|\}}\left(\sup_{\{|x|<2|z|<3|x|\}}|G^{c}_{d/p,\chi}(z)|\right)^{q}
\frac{d\lambda(x)}{|x|^{d}}\\&
\lesssim r_{0}^{-d}{\rm e}^{-qc^{\prime}\frac{r_{0}}{4}}.
\end{split}
\end{equation}
Then, as in \eqref{check4}, plugging \eqref{crit_check2} and \eqref{check3} into \eqref{crit_check1}, we obtain
\begin{multline}\label{crit_check3}
\left(\int_{\{r_{0}\leq |x|\}}\left(\sup_{\{|x|<2|z|<3|x|\}}|G^{c}_{d/p,\chi}(z)|\right)^{q}
\frac{d\lambda(x)}{|x|^{d}}\right)^{\frac{1}{q}}
\left(\int_{\{|x|< r_{0}\}}d\lambda(x)\right)^{\frac{1}{p^{\prime}}}\\\lesssim r_{0}^{-\frac{d}{q}}{\rm e}^{-c^{\prime}\frac{r_{0}}{4}}
{\rm e}^{C\frac{r_{0}}{p^{\prime}}}<\infty
\end{multline}
since $c$ (hence $c^{\prime}$) is large enough.

Let us now check \eqref{crit_check1} for $0<r_{0}\leq 1$. When $|z|>1$ using the exponential decay estimate of $G_{d/p,\chi}^{c}(z)$ from \eqref{A_alp} it is easy to verify \eqref{crit_check1}. So, let us show the case $|z|\leq 1$. In this case, taking into account $|x|<2|z|\leq 2$ we write
\begin{multline}\label{crit_check4}
\int_{\{2r_{0}\leq |x|\}}\left(\sup_{\{|x|<2|z|<3|x|\}}|G^{c}_{d/p,\chi}(z)|\right)^{q}
\frac{d\lambda(x)}{\left|\log\left({\rm e}+\frac{1}{|x|}\right)\right|^{r}|x|^{d}}
\\=
\int_{\{2r_{0}\leq |x|\leq 1\}}\left(\sup_{\{|x|<2|z|<3|x|\}}|G^{c}_{d/p,\chi}(z)|\right)^{q}\frac{d\lambda(x)}{\left|\log\left({\rm e}+\frac{1}{|x|}\right)\right|^{r}|x|^{d}}
\\+\int_{\{1<|x|<2\}}\left(
\sup_{\{|x|<2|z|<3|x|\}}|G^{c}_{d/p,\chi}(z)|\right)^{q}\frac{d\lambda(x)}{\left|\log\left({\rm e}+\frac{1}{|x|}\right)\right|^{r}|x|^{d}}.
\end{multline}
We see from \eqref{A_alp} that the second integral on the right-hand side of \eqref{crit_check4} is finite. For the first integral on the right-hand side, noting \eqref{A_alp} we deduce that
\begin{equation*}
\begin{split}
\int_{\{2r_{0}\leq |x|\leq 1\}}\left(\sup_{\{|x|<2|z|<3|x|\}}|G^{c}_{d/p,\chi}(z)|\right)^{q}&
\frac{d\lambda(x)}{\left|\log\left({\rm e}+\frac{1}{|x|}\right)\right|^{r}|x|^{d}}\\&
\leq \int_{\{2r_{0}\leq |x|\leq 1\}}\left( \sup_{\{|x|<2|z|<3|x|\}}|G^{c}_{d/p,\chi}(z)|\right)^{q}
\frac{d\lambda(x)}{|x|^{d}}\\&\lesssim
\int_{\{2r_{0}\leq |x|\leq 1\}}|x|^{-dq/p'-d}d\lambda(x)\\&
\lesssim r_{0}^{-dq/p'},
\end{split}
\end{equation*}
which implies with \eqref{crit_check4} that
\begin{equation*}
\begin{split}\left(\int_{\{2r_{0}\leq |x|\}}\left( \sup_{\{|x|<2|z|<3|x|\}}|G^{c}_{d/p,\chi}(z)|\right)^{q}
\frac{d\lambda(x)}{\left|\log\left({\rm e}+\frac{1}{|x|}\right)\right|^{r}|x|^{d}}\right)^{\frac{1}{q}}&
\left(\int_{\{|x|< 2r_{0}\}}d\lambda(x)\right)^{\frac{1}{p^{\prime}}}
\\&\leq C(r_{0}^{-d/p'}+1)r_{0}^{d/p'}\leq C
\end{split}
\end{equation*}
for any $0<r_{0}\leq 1$.

Now we estimate $N_{3}$. As in the case for $M_{3}$, we have for $N_{3}$ that
$$N_{3}\leq \int_{\G}\left(\int_{\{|y|\geq 2|x|\}}|f(y)|\left(
\sup_{\{|y|\leq 2|z|\leq 3|y|\}}|G_{d/p,\chi}^{c}(z)|\right)d\lambda(y)\right)^{q}\frac{d\lambda(x)}
{\left|\log\left({\rm e}+\frac{1}{|x|}\right)\right|^{r}|x|^{d}}.$$
For $N_{3}$ we apply the conjugate integral Hardy inequality \eqref{integ_Hardy2}, for which we need to check the following condition \eqref{integ_Hardy2_cond}: 
\begin{multline}\label{crit_check6}
\left(\int_{\{|x|\leq 2r_{0}\}}\frac{d\lambda(x)}{\left|\log\left({\rm e}+\frac{1}{|x|}\right)\right|^{r}|x|^{d}}\right)^{\frac{1}{q}}\\
\times\left(\int_{\{2r_{0}\leq |y|\}}\left(
\sup_{\{|y|\leq 2|z|\leq 3|y|\}}|G_{d/p,\chi}^{c}(z)|\right)^{p^{\prime}}d\lambda(y)\right)^{\frac{1}{p^{\prime}}}<\infty
\end{multline}
for all $r_{0}>0$. Indeed, once \eqref{crit_check6} has been established, the conjugate integral Hardy inequality \eqref{integ_Hardy2} yields 
\begin{equation}\label{N3}
N_{3}^{\frac{1}{q}}\leq C\|f\|_{L^{p}(\lambda)},
\end{equation}
where $C$ does not depend on $f$.

In order to check this, we consider the cases: $r_{0}>1$ and $0<r_{0}\leq 1$. If we write
\begin{multline*}
\int_{\{|x|\leq 2r_{0}\}}\frac{dx}{\left|\log\left({\rm e}+\frac{1}{|x|}\right)\right|^{r}|x|^{d}}
=\int_{\left\{|x|<\frac{1}{2}\right\}}\frac{dx}{\left|\log\left({\rm e}+\frac{1}{|x|}\right)\right|^{r}|x|^{d}}\\+
\int_{\left\{\frac{1}{2}\leqslant |x|\leq 2r_{0}\right\}}\frac{dx}{\left|\log\left({\rm e}+\frac{1}{|x|}\right)\right|^{r}|x|^{d}},
\end{multline*}
then we see that the first summand on the right-hand side of above is finite since $r>1$. For the second term, using
\eqref{check3} we have
\begin{equation}\label{crit_check7}
\int_{\left\{\frac{1}{2}\leq |x|\leq 2r_{0}\right\}}\frac{d\lambda(x)}{\left|\log\left({\rm e}+\frac{1}{|x|}\right)\right|^{r}|x|^{d}}
\leq \int_{\left\{\frac{1}{2}\leq |x|\leq 2r_{0} \right\}}\frac{d\lambda(x)}{|x|^{d}}\leq 2^{d}{\rm e}^{C_{5}r_{0}}
\end{equation}
for some positive constant $C_{5}$.
Combining \eqref{check10} and \eqref{crit_check7}, one obtains for $r_{0}>1$ that
\begin{multline*}
\left(\int_{\{|x|\leq 2r_{0}\}}\frac{d\lambda(x)}{\left|\log\left({\rm e}+\frac{1}{|x|}\right)\right|^{r}|x|^{d}}\right)^{\frac{1}{q}}
\left(\int_{\{2r_{0}\leq |y|\}}\left(\sup_{\{|y|\leq 2|z|\leq 3|y|\}}|G_{d/p,\chi}^{c}(z)|\right)^{p^{\prime}}d\lambda(y)\right)^{\frac{1}{p^{\prime}}}\\
\lesssim
(1+2^{d}{\rm e}^{C_{5}r_{0}})^{\frac{1}{q}}{\rm e}^{-c^{\prime}\frac{r_{0}}{4}}<\infty,
\end{multline*}
since $c$ (hence $c^{\prime}$) is large enough.

Now we check the condition \eqref{crit_check6} for $0<r_{0}\leq 1$. As above, for $|z|>1$ using \eqref{A_alp} it is straightforward to get \eqref{crit_check6}. So, for $|z|\leq 1$ we write
\begin{multline}\label{crit_check8}
\int_{\{2r_{0}\leq |y|\}}\left( \sup_{\{|y|\leq 2|z|\leq 3|y|\}}|G_{d/p,\chi}^{c}(z)|\right)^{p^{\prime}}d\lambda(y)\\
=
\int_{\{2r_{0}\leq |y|\leq 1\}}\left( \sup_{\{|y|\leq 2|z|\leq 3|y|\}}|G_{d/p,\chi}^{c}(z)|\right)^{p^{\prime}}d\lambda(y)\\+\int_{\{|y|>1\}}\left(
\sup_{\{|y|\leq 2|z|\leq 3|y|\}}|G_{d/p,\chi}^{c}(z)|\right)^{p^{\prime}}d\lambda(y).
\end{multline}
We note from \eqref{check10} that the second integral on the right-hand side of above is finite. Then, by \eqref{A_alp} we get
for the first integral that
\begin{equation*}
\int_{\{2r_{0}\leq |y|\leq 1\}}\left(\sup_{\{|y|\leq 2|z|\leq 3|y|\}}|G_{d/p,\chi}^{c}(z)|\right)^{p'}d\lambda(y)\leq C
\int_{\{2r_{0}\leq |y|\leq 1\}}|y|^{-d}d\lambda(y)
\leq C \log\left(\frac{1}{r_{0}}\right).
\end{equation*}
It follows with \eqref{crit_check8} that
\begin{equation}\label{crit_check9}
\int_{\{2r_{0}\leq |y|\}}\left(\sup_{\{|y|\leq 2|z|\leq 3|y|\}}|G_{d/p,\chi}^{c}(z)|\right)^{p^{\prime}}d\lambda(y)\leq
C\left(1+\log\left(\frac{1}{r_{0}}\right)\right).
\end{equation}
Since we have
$$\int_{\{ |x|\leq 2r_{0}\}}\frac{dx}{\left|\log\left({\rm e}+\frac{1}{|x|}\right)\right|^{r}|x|^{d}}\leqslant
C\left(\log\left({\rm e}+\frac{1}{r_{0}}\right)\right)^{-(r-1)},$$
and \eqref{crit_check9}, then taking into account $r>1$ and $q<(r-1)p^{\prime}$ we obtain that
\begin{equation}\label{crit_check10}
\begin{split}
\left(\int_{\{|x|\leq 2r_{0}\}}\frac{d\lambda(x)}{\left|\log\left({\rm e}+\frac{1}{|x|}\right)\right|^{r}|x|^{d}}\right)
^{\frac{1}{q}}&\left(\int_{\{2r_{0}\leq |y|\}} \left(\sup_{\{|y|\leq 2|z|\leq 3|y|\}}|G_{d/p,\chi}^{c}(z)|\right)^{p^{\prime}}d\lambda(y)\right)
^{\frac{1}{p^{\prime}}}\\&\leq C\left(\log \left({\rm e}+\frac{1}{r_{0}}\right)\right)^{-\frac{r-1}{q}}\left(1+\left(\log
\left(\frac{1}{r_{0}}\right)\right)^{\frac{1}{p^{\prime}}}\right)\\&
\leq C.
\end{split}
\end{equation}

Now it remains to estimate $N_{2}$. We rewrite $N_{2}$ as
\begin{multline*}
N_{2}=\\\sum_{k\in\mathbb{Z}}\int_{\{2^{k}\leqslant |x|<2^{k+1}\}}\left(\int_{\{|x|\leqslant 2|y|\leqslant 4|x|\}}|
G^{c}_{d/p}(y^{-1}x)f(y)|d\lambda(y)\right)^{q}\frac{d\lambda(x)}{\left|\log\left({\rm e}+\frac{1}{|x|}\right)\right|^{r}|x|^{d}}.
\end{multline*}
Since the function $\left(\log\left(\frac{1}{|x|}\right)\right)^{r}|x|^{d}$ is non-decreasing with respect to $|x|$ near the
origin, then we can say that there exists an integer $k_{0}\in\mathbb{Z}$ with $k_{0}\leqslant -3$ such that this function is non-decreasing in
$|x|\in(0,2^{k_{0}+1})$. We decompose $N_{2}$ with this $k_{0}$ as follows
\begin{equation}\label{N2}
N_{2}=N_{21}+N_{22},
\end{equation}
where
\begin{multline*}
N_{21}:=\\\sum_{k=-\infty}^{k_{0}}\int_{\{2^{k}\leqslant |x|<2^{k+1}\}}\left(\int_{\{|x|\leqslant 2|y|\leqslant 4|x|\}}|
G^{c}_{d/p}(y^{-1}x)f(y)|d\lambda(y)\right)^{q}\frac{d\lambda(x)}{\left|\log\left({\rm e}+\frac{1}{|x|}\right)\right|^{r}|x|^{d}}
\end{multline*}
and
\begin{multline*}N_{22}:=\\\sum_{k=k_{0}+1}^{\infty}\int_{\{2^{k}\leqslant |x|<2^{k+1}\}}\left(\int_{\{|x|\leqslant 2|y|\leqslant 4|x|\}}|
G^{c}_{d/p}(y^{-1}x)f(y)|d\lambda(y)\right)^{q}\frac{d\lambda(x)}{\left|\log\left({\rm e}+\frac{1}{|x|}\right)\right|^{r}|x|^{d}}.
\end{multline*}
Let us first estimate $N_{22}$. Using \eqref{M22} we obtain the following estimate for $N_{22}$
\begin{equation}\label{N22}
N_{22}\leqslant C \sum_{k=k_{0}+1}^{\infty}\int_{\{2^{k}\leqslant |x|<2^{k+1}\}}\left(\int_{\{|x|\leqslant 2|y|\leqslant
4|x|\}}| G^{c}_{d/p}(y^{-1}x)f(y)|dy\right)^{q}dx\leq C\|f \|^{q}_{L^{p}(\lambda)}.
\end{equation}
To complete the proof of Theorem \ref{main4_thm} it is left to estimate $N_{21}$. Note that the condition $|y|\leqslant 2|x|$ implies \begin{equation}\label{quasi_Euc_norm2}
3|x|=|x|+2|x|\geq |x|+|y|\geq |y^{-1}x|.
\end{equation}
Since $\left(\log\left(\frac{1}{|x|}\right)\right)^{r}|x|^{d}$ is non-decreasing in $|x| \in
(0,2^{k_{0}+1})$ and $3|x|\geqslant |y^{-1}x|$, we get $$\left(\log\left(\frac{1}{|x|}\right)\right)^{r}|x|^{d}\geq
\left(\log\left(\frac{1}{\left|\frac{y^{-1}x}{3}\right|}\right)\right)^{r}\left|\frac{y^{-1}x}{3}\right|^{d}.$$
Then, these and \eqref{A_alp} give
\begin{multline*}
N_{21}\leq \\ C\sum_{k=-\infty}^{k_{0}}\int_{\{2^{k}\leq|x|<2^{k+1}\}}\left(\int_{\{|x|\leq2|y|\leq4|x|\}}
|y^{-1}x|^{-\frac{d}{p^{\prime}}}|f(y)|d\lambda(y)\right)^{q}\frac{d\lambda(x)}
{\left(\log\left(\frac{1}{|x|}\right)\right)^{r}|x|^{d}}\\= C\sum_{k=-\infty}^{k_{0}}\int_{\{2^{k}\leq|x|<2^{k+1}\}}\left(\int_{\{|x|\leq2|y|\leq4|x|\}}\frac{|y^{-1}x|
^{-\frac{d}{p^{\prime}}}|f(y)|}{\left(\left(\log\left(\frac{1}{|x|}\right)\right)^{r}|x|^{d}\right)^{\frac{1}{q}}}d\lambda(y)
\right)^{q}d\lambda(x)\\
\leq C\sum_{k=-\infty}^{k_{0}}\int_{\{2^{k}\leq|x|<2^{k+1}\}}\left(\int_{\{|x|\leq2|y|\leq4|x|\}}\frac{|y^{-1}x|
^{-\frac{d}{p^{\prime}}}|f(y)|d\lambda(y)}{\left(\left(\log\left(\frac{3}{|y^{-1}x|}\right)\right)^{r}
\left|\frac{y^{-1}x}{3}\right|^{d}\right)^{\frac{1}{q}}}\right)^{q}d\lambda(x).
\end{multline*}
Since the conditions $|x|\leq2|y|\leq4|x|$ and $2^{k}\leq|x|<2^{k+1}$ with $k\leq k_{0}$ imply $2^{k-1}\leq|y|<2^{k+2}$, while \eqref{quasi_Euc_norm2} and $k_{0}\leq-3$ yield $|y^{-1}x|\leq3|x|<3\cdot 2^{k_{0}+1}\leq3/4$. By these and
setting $$g_{2}(x):=\frac{\chi_{B_{\frac{3}{4}}(0)}(x)}{\left(\log\left(\frac{1}{|x|}\right)\right)
^{\frac{r}{q}}|x|^{\frac{d}{q}+\frac{d}{p'}}},$$
we obtain for $N_{21}$ that
\begin{multline*}
N_{21}\\ \leq C\sum_{k=-\infty}^{k_{0}}\int_{\{2^{k}\leq|x|<2^{k+1}\}}\left(\int_{\{|x|\leq2|y|\leq4|x|\}}\frac{|f(y)|d\lambda(y)}{
{\left(\log\left(\frac{1}{|y^{-1}x|}\right)\right)^{\frac{r}{q}}|y^{-1}x|^{\frac{d}{q}+\frac{d}{p'}}}}\right)^{q}
d\lambda(x)\\\leq C\sum_{k=-\infty}^{k_{0}}\|[f\cdot\chi_{\{2^{k-1}\leq |\cdot|<2^{k+2}\}}]\ast g\|^{q}_{L^{q}(\lambda)}.
\end{multline*}
Since $p\leq q<(r-1)p'$, we apply Young's inequality \eqref{Young_for} for $1+\frac{1}{q}=\frac{1}{\tilde{r}}+\frac{1}{p}$ with
$\tilde{r}\in[1,\infty)$ to get
\begin{equation}\label{N21}
N_{21}\leq C
\|g_{2}\|^{q}_{L^{\tilde{r}}(\lambda)}\sum_{k=-\infty}^{k_{0}}\|f\cdot\chi_{\{2^{k-1}\leq|\cdot|<2^{k+2}\}}\|^{q}_{L^{p}(\lambda)}\leq
C\|f\|^{q}_{L^{p}(\lambda)},
\end{equation}
provided that $g_{2}\in L^{\tilde{r}}(\lambda)$. Since $\left(\frac{d}{q}+\frac{d}{p'}\right)\tilde{r}=d$,
$\frac{r\tilde{r}}{q}=\frac{rp'}{p'+q}$ and $q<(r-1)p'$, then the change of the variable $t=\log\left(\frac{1}{|x|}\right)$ gives
$$\|g_{2}\|^{\tilde{r}}_{L^{\tilde{r}}(\lambda)}=
\int_{B(0,3/4)}\frac{d\lambda(x)}{\left(\log\left(\frac{1}{|x|}\right)\right)^{\frac{rp'}{p'+q}}|x|^{d}}
\leq C\int_{\log\left(\frac{4}{3}\right)}^{\infty}\frac{dt}{t^{\frac{rp'}{p'+q}}}<\infty.$$
Thus, \eqref{N1}, \eqref{N3}, \eqref{N2}, \eqref{N22}, \eqref{N21} and \eqref{N123} complete the proof of Theorem
\ref{main4_thm}.
\end{proof}
\subsection{Proof of Theorems \ref{CKN_thm}, \ref{CKN_thm_crit}, \ref{HLS_thm} and Corollary \ref{uncer_cor}}

First, let us prove Theorem \ref{CKN_thm}, using the Hardy-Sobolev-Rellich inequality \eqref{main1_thm_for1}.
\begin{proof}[Proof of Theorem \ref{CKN_thm}]
Since $\theta>(r-q)/r$, using H\"{o}lder's inequality for $\frac{q-(1-\theta)r}{q}+\frac{(1-\theta)r}{q}=1$, we calculate
\begin{multline*}
\||x|^{a}f\|_{L^{r}(\mu_{\chi^{\widetilde{q}/p}\delta^{1-\widetilde{q}/p}})}= \left(\int_{\G}\frac{|f(x)|^{\theta r}}{|x|^{r(b(1-\theta)-a)}}\cdot\frac{|f(x)|^{(1-\theta)r}}{|x|^{-br(1-\theta)}}d\mu_{\chi^{\widetilde{q}/p}\delta^{1-\widetilde{q}/p}}(x)
\right)^{\frac{1}{r}}\\\leq
\left(\left(\int_{\G}\frac{|f(x)|^{\theta r\frac{q}{q-(1-\theta)r}}}{|x|^{r(b(1-\theta)-a)\frac{q}{q-(1-\theta)r}}}d\mu_{\chi^{\widetilde{q}/p}\delta^{1-\widetilde{q}/p}}(x)\right)
^{\frac{q-(1-\theta)r}{q}}\right.\\\left.
\times
\left(\int_{\G}\frac{|f(x)|^{(1-\theta)r\frac{q}{(1-\theta)r}}}{|x|^{-br(1-\theta)\frac{q}{(1-\theta)r}}}
d\mu_{\chi^{\widetilde{q}/p}\delta^{1-\widetilde{q}/p}}(x)\right)^{\frac{(1-\theta)r}{q}}\right)^{\frac{1}{r}}
\end{multline*}
$$
=\left\|\frac{f}{|x|^{\frac{b(1-\theta)-a}{\theta}}}\right\|^{\theta}
_{L^{\frac{qr\theta}{q-(1-\theta)r}(\mu_{\chi^{\widetilde{q}/p}\delta^{1-\widetilde{q}/p}})}}
\||x|^{b}f\|^{1-\theta}_{L^{q}(\mu_{\chi^{\widetilde{q}/p}\delta^{1-\widetilde{q}/p}})}.
$$
Now, since we have $\alpha>0$, $p\leq q\theta r/(q-(1-\theta)r)$, $0\leq qr(b(1-\theta)-a)/(q-(1-\theta)r)<d$ and $1/p-(q-(1-\theta)r)/(qr\theta)\leq \alpha/d-(b(1-\theta)-a)/(\theta d)$, then applying \eqref{main1_thm_for1} we obtain \eqref{CKN}.
\end{proof}
Similarly, one can obtain Theorem \ref{CKN_thm_crit} from Theorem \ref{main3_thm}.

Now let us give the proof of Hardy-Littlewood-Sobolev inequality \eqref{HLS_thm_ineq} on general Lie group:
\begin{proof}[Proof of Theorem \ref{HLS_thm}] Using H\"{o}lder's inequality for $q/(p+q)+p/(p+q)=1$ one has
\begin{equation}\label{HLS_proof_eq1}
\begin{split}\left|\int_{\G}\int_{\G}\right.&\left.\frac{\overline{f(x)}g(y)G_{a_{2},\chi}^{c}(y^{-1}x)}{|x|^{a_{1}}|y|^{\beta}}
d\mu_{\chi^{(p+q)/pq}\delta^{1-(p+q)/pq}}(x)d\rho(y)\right|\\&=
\left|\int_{\G}\overline{f(x)}\frac{\left(\frac{g}{|x|^{\beta}}\ast G_{a_{2},\chi}^{c}\right)(x)}{|x|^{a_{1}}}
d\mu_{\chi^{(p+q)/pq}\delta^{1-(p+q)/pq}}(x)\right|\\&\leq
\|f\|_{L^{(p+q)/q}(\mu_{\chi^{(p+q)/pq}\delta^{1-(p+q)/pq}})}\left\|\frac{\frac{g}{|x|^{\beta}}\ast G_{a_{2},\chi}^{c}}{|x|^{a_{1}}}\right\|_{L^{(p+q)/p}(\mu_{\chi^{(p+q)/pq}\delta^{1-(p+q)/pq}})}.
\end{split}
\end{equation}
Since $\alpha\geq 0$, $1/p-q/(p+q)\leq \alpha/d$, and the fact that $0\leq 1/p-q/(p+q)$ implies $(p+q)/q\geq p$, then applying unweighted version of the Hardy-Sobolev-Rellich inequality \eqref{main1_thm_for1} we have
\begin{equation}\label{HLS_proof_eq2}
\|f\|_{L^{(p+q)/q}(\mu_{\chi^{(p+q)/pq}\delta^{1-(p+q)/pq}})}\lesssim \|f\|_{L^{p}_{\alpha}(\mu_{\chi})}.
\end{equation}
Since $0\leq a_{1}<dp/(p+q)$, $a_{2}>0$ and $1/q-p/(p+q)\leq (a_{2}-a_{1})/d$, then \eqref{main1_thm_for1} implies
\begin{equation}\label{HLS_proof_eq3}
\left\|\frac{\frac{g}{|x|^{\beta}}\ast G_{a_{2},\chi}^{c}}{|x|^{a_{1}}}\right\|_{L^{(p+q)/p}(\mu_{\chi^{(p+q)/pq}\delta^{1-(p+q)/pq}})}
\lesssim \left\|\frac{g}{|x|^{\beta}}\right\|_{L^{q}(\mu_{\chi})}\lesssim \|g\|_{L^{q}(\mu_{\chi})},
\end{equation}
where we have used \eqref{lef_rig_Hardy1} in the last inequality since $0\leq \beta<d/q$.
Thus, plugging \eqref{HLS_proof_eq2} and \eqref{HLS_proof_eq3} into \eqref{HLS_proof_eq1} we obtain \eqref{HLS_thm_ineq}.
\end{proof}

Now we prove Corollary \ref{uncer_cor}.
\begin{proof}[Proof of Corollary \ref{uncer_cor}] By \eqref{main1_thm_for1} and H\"{o}lder's inequality for $1/q+1/q'=1$, we obtain
\begin{multline*}
\|f\|_{L^{p}_{\alpha}(\mu_{\chi})}\||x|^{\frac{\beta}{q}}f\|_{L^{q^{\prime}}(\mu_{\chi^{q/p}\delta^{1-q/p}})}
\\ \gtrsim \left\|\frac{f}{|x|^{\frac{\beta}{q}}}\right\|_{L^{q}(\mu_{\chi^{q/p}\delta^{1-q/p}})}
\||x|^{\frac{\beta}{q}}f\|_{L^{q^{\prime}}(\mu_{\chi^{q/p}\delta^{1-q/p}})}\\
\geq \|f\|^{2}_{L^{2}(\mu_{\chi^{q/p}\delta^{1-q/p}})},
\end{multline*}
which is \eqref{uncer_1}.

Similarly, Theorem \ref{main3_thm} implies the second part of Corollary \ref{uncer_cor}.
\end{proof}

\section{Appendix: The case of compact Lie groups}
\label{SEC:app_compact}
In this section we show that the obtained results on noncompact Lie groups actually hold also on compact Lie groups in a similar way. In the setting of compact Lie groups, we have $\delta=1$ hence $d\lambda =d\rho$, and the continuous positive character $\chi$ must be identically equal to $1$. We refer to \cite{RT10} for the background material as well as the Fourier analysis on compact Lie groups.

Let us recall the following result:
\begin{thm}\cite[VIII.2.9 Theorem]{VCS92}\label{est_heat_com_great1}
If $\G$ has a polynomial growth, there exist two positive constants $C_{1}$ and $C_{2}$ such that
\begin{equation}\label{est_heat_com_for_great1}
C_{1}V(\sqrt{t})^{-1}\exp(C_{2}|x|^{2}/t)\leq p_{t}(x)\leq C_{2}V(\sqrt{t})^{-1}\exp(-C_{1}|x|^{2}/t)
\end{equation}
for all $t>0$ and $x\in \G$.
\end{thm}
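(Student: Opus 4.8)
The plan is to obtain \eqref{est_heat_com_for_great1} from the two geometric inputs that polynomial growth supplies -- the global volume doubling property and a scaled $L^{2}$-Poincar\'{e} inequality -- via the Grigor'yan--Saloff-Coste characterization of the parabolic Harnack principle, which asserts that these two conditions together are equivalent to the two-sided Gaussian bound. First I would record the inputs. The volume estimates \eqref{V_less1}--\eqref{V_great1}, together with polynomiality of the growth function, give doubling $V(2r)\le C\,V(r)$ for all $r>0$ (for compact $\G$ this is immediate, since $V(r)\approx r^{d}$ for small $r$ and saturates to the finite total volume for large $r$), while the H\"{o}rmander condition on $X=\{X_{1},\ldots,X_{n}\}$ yields the Poincar\'{e} inequality $\int_{B}|f-f_{B}|^{2}\,d\rho\lesssim r_{B}^{2}\int_{B}\sum_{j}|X_{j}f|^{2}\,d\rho$ on Carnot--Carath\'{e}odory balls $B=B(c_{B},r_{B})$, with $f_{B}$ the average of $f$ over $B$. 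These are exactly the hypotheses of the cited equivalence, whose conclusion is \eqref{est_heat_com_for_great1}.

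For a self-contained argument I would instead prove the two bounds separately. For the upper bound, the first step is the on-diagonal estimate $p_{t}(e)\lesssim V(\sqrt{t})^{-1}$, which follows from a Nash- (equivalently, Faber--Krahn-) type inequality obtained from the isoperimetric profile controlled by the volume growth. The off-diagonal Gaussian decay is then produced by Davies' perturbation method: conjugating $e^{-t\Delta}$ by $e^{\lambda\psi}$ for a bounded Lipschitz weight $\psi$ with $|\nabla\psi|\le 1$ in the Carnot--Carath\'{e}odory metric, bounding the perturbed quadratic form, and optimizing in $\lambda$; this upgrades the on-diagonal bound to $p_{t}(x)\lesssim V(\sqrt{t})^{-1}\exp(-C_{1}|x|^{2}/t)$.

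For the lower bound I would first establish a near-diagonal estimate $p_{t}(x)\gtrsim V(\sqrt{t})^{-1}$ valid for $|x|\le\varepsilon\sqrt{t}$, using the on-diagonal value together with the conservativeness $\int_{\G}p_{t}\,d\rho=1$ and a H\"{o}lder/oscillation (Harnack) control of the kernel. A chaining argument then propagates this to arbitrary $x$: one interpolates $e$ and $x$ by $N\approx|x|^{2}/t$ equally spaced points, applies the semigroup identity $p_{t}=p_{t/N}^{\ast N}$ along the chain, and invokes volume doubling at each step, so that the product of $N$ near-diagonal factors generates exactly the Gaussian factor $\exp(-C_{2}|x|^{2}/t)$. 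The main obstacle is the off-diagonal control in both directions: on the upper side Davies' method must measure $|\nabla\psi|$ in the sub-Riemannian metric rather than the Euclidean one, and on the lower side the chaining must be carried out so that the accumulation of doubling constants over the $\approx|x|^{2}/t$ steps stays controlled, which is what forces the exponent to be a (possibly non-sharp) constant multiple of $|x|^{2}/t$; matching the two resulting constants into the single display \eqref{est_heat_com_for_great1} is then routine.
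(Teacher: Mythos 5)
The paper does not actually prove this statement: it is quoted, as a known result, from \cite[VIII.2.9 Theorem]{VCS92} (the citation in the theorem header is the entire ``proof''), and is then used as a black box in Section \ref{SEC:app_compact} to derive Lemma \ref{est_G_com_lem} in the compact case. So there is no internal argument to compare yours against; what you have written is a proof sketch of the cited theorem itself. As such, your outline is sound and identifies the two standard routes correctly: the Grigor'yan--Saloff-Coste characterization of two-sided Gaussian bounds via global volume doubling plus a scaled $L^{2}$-Poincar\'e inequality, and the classical route --- Nash/Faber--Krahn on-diagonal upper bound, Davies' perturbation (with the weight's gradient measured in the Carnot--Carath\'eodory metric) for the off-diagonal decay, and a near-diagonal lower bound propagated by chaining with $N\approx|x|^{2}/t$ steps --- which is essentially how the estimate is established in the cited book. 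Two caveats. First, each of your inputs (global doubling on a group of polynomial growth, the global Poincar\'e inequality for H\"ormander vector fields, the near-diagonal lower estimate) is itself a substantial theorem, so your sketch is a reduction to results of comparable depth rather than a self-contained proof; for a result of this type that is acceptable, but you should attribute these ingredients explicitly rather than treat them as routine. Second, note that the statement as printed contains a sign typo: the lower bound reads $C_{1}V(\sqrt{t})^{-1}\exp(C_{2}|x|^{2}/t)\leq p_{t}(x)$, which is literally false (for fixed $t$ and large $|x|$ it would contradict the upper bound); your argument yields the correct form $C_{1}V(\sqrt{t})^{-1}\exp(-C_{2}|x|^{2}/t)\leq p_{t}(x)$, which is what \cite{VCS92} states and what the paper in fact needs.
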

Now we give an analogue of Lemma \ref{lemma4.1_orig} on compact Lie groups when $0<\alpha<d$, since we have actually used only this case of Lemma \ref{lemma4.1_orig} in the noncompact case:
\begin{lem}\label{est_G_com_lem} Let $0<\alpha<d$. If $c>0$ is sufficiently large, then we have
\begin{equation}\label{less1_com}
|G_{\alpha,\chi}^{c}|\leq C|x|^{\alpha-d}
\end{equation}
for all $x\in\G$ and some positive constant $C$.
\end{lem}
\begin{proof}[Proof of Lemma \ref{est_G_com_lem}] Taking into account Theorem \ref{est_heat_com_great1} with \eqref{V_less1} and \eqref{V_great1} as well as the relation \eqref{G_bessel_rep}, we have
\begin{equation*}
\begin{split}
|G_{\alpha,\chi}^{c}(x)|&=\left|C(\alpha)\int_{0}^{\infty}t^{\alpha/2-1}{\rm e}^{-ct}p_{t}(x)dt\right|\\&
\lesssim \int_{0}^{1}t^{(\alpha-d)/2-1}{\rm e}^{-ct}{\rm e}^{-C|x|^{2}/t}dt\\
&+\int_{1}^{\infty}t^{(\alpha-D)/2-1}{\rm e}^{-ct}{\rm e}^{-C|x|^{2}/t}dt=:G_{1}(x)+G_{2}(x).
\end{split}
\end{equation*}
It is easy to see that $G_{2}(x)\lesssim1$, since $c$ is large enough.

In the exact same way as in the proof of Lemma \ref{lemma4.1_orig} (see \cite[Proof of Lemma 4.1]{BPTV18}), using the change of variables $|x|^{2}/t=u$ we arrive at
\begin{equation*}
\begin{split}
G_{1}(x)&\lesssim \int_{0}^{1}t^{(\alpha-d)/2-1}{\rm e}^{-C|x|^{2}/t}dt
=|x|^{\alpha-d}\int_{|x|^{2}}^{\infty}u^{\frac{d-\alpha}{2}}{\rm e}^{-Cu}\frac{du}{u},
\end{split}
\end{equation*}
which gives the estimate \eqref{less1_com}.
\end{proof}
Since we have Lemma \ref{est_G_com_lem}, $\underset{x\in B_{r}}{{\rm sup}}\chi(x)={\rm const}$ and $\int_{B_{r}}\chi d\rho= {\rm const}$ for every $r\gg1$, which play key roles in the proof of Theorems \ref{main1_thm} and \ref{main3_thm}, then we also have these Theorems \ref{main1_thm} and \ref{main3_thm} with $\delta=1$ on compact Lie group.

Note that in the proof of Theorems \ref{CKN_thm}, \ref{CKN_thm_crit} and \ref{HLS_thm}, and Corollary \ref{uncer_cor}, we only use H\"{o}lder's inequality, and Theorems \ref{main1_thm} and \ref{main3_thm}. Therefore, since now we have Theorems \ref{main1_thm} and \ref{main3_thm} with $\delta=1$ on compact Lie groups, then Theorems \ref{CKN_thm}, \ref{CKN_thm_crit} and \ref{HLS_thm}, and Corollary \ref{uncer_cor} also hold on compact Lie group, with $\delta=1$.

\medskip
\paragraph{{\bf Acknowledgments}} The authors would like to thank the anonymous referees for
their helpful and constructive comments that greatly contributed to improving the final
version of the paper. The authors would also like to thank Zhirayr Avetisyan for discussions.

\end{document}